\documentclass[12pt,reqno]{amsart}
\usepackage[utf8]{inputenc}
\usepackage{amsfonts, amssymb, amsmath, amsthm, mathtools, thmtools}
\usepackage{mathrsfs} 
\usepackage{latexsym}
\usepackage{enumerate}
\usepackage{multicol}
\usepackage{times}
\usepackage{verbatim}
\usepackage{tikz-cd}
\usepackage{fullpage}
\usepackage{here}
\usepackage{url}
\usepackage{csquotes}
\usepackage{placeins}
\usepackage{epic}
\usepackage{graphicx}
\usepackage{epstopdf}
\usepackage{pstricks}
\usepackage{pst-plot}
\usepackage{tikz}
\usepackage{xcolor}
\usepackage{graphicx}
\usepackage{subcaption}

\usetikzlibrary{shapes.geometric, positioning, calc}

\input xypic
\xyoption{all}
\xyoption{poly}

\usepackage[colorlinks=true]{hyperref}
\hypersetup{citecolor=blue, linkcolor=blue}

\usepackage[capitalise]{cleveref}

\crefname{equation}{}{}
\crefname{figure}{{\sc Figure}}{{\sc Figure}}
\crefname{subsection}{Subsection}{Subsections}

\usetikzlibrary{matrix,arrows,decorations.pathmorphing}

\usepackage[euler]{textgreek}
\usepackage{tikz,tikz-cd}
\usepackage[colorinlistoftodos, textwidth = 2.3cm]{todonotes}

\newtheorem{theorem}{Theorem}[section]
\newtheorem{proposition}[theorem]{Proposition}
\newtheorem{lemma}[theorem]{Lemma}
\newtheorem{corollary}[theorem]{Corollary}

\newtheorem*{claim*}{Claim}

\theoremstyle{definition}

\newtheorem{remark}[theorem]{Remark}

\newcommand{\F}{{\mathbb F}}

\newcommand{\Z}{{\mathbb Z}}

\usepackage{tabstackengine}
\stackMath

\numberwithin{equation}{section} 
\numberwithin{figure}{section}
\numberwithin{table}{section}

\title{Additive decompositions of multiplicative subgroups \\via differential identities}
\author{Chi Hoi Yip}
\address{Department of Mathematics, Hong Kong University of Science and Technology, Clear Water Bay, Hong Kong}
\email{machyip@ust.hk}
\author{Semin Yoo}
\address{Discrete Mathematics Group \\ Institute for Basic Science \\ 55 Expo-ro Yuseong-gu, Daejeon 34126 \\ South Korea}
\email{syoo19@ibs.re.kr}
\keywords{additive decomposition, multiplicative subgroup}
\subjclass[2020]{11B30, 11P70}

\begin{document}
\begin{abstract}
S\'ark\"ozy conjectured that the nonzero quadratic residues modulo a sufficiently large prime have no nontrivial additive decomposition. Hanson and Petridis proved the conjecture for almost all primes, and Kalmynin completed the proof. Kalmynin also developed a general framework for additive decompositions of multiplicative subgroups. More recently, Rudnev and Tyrrell used this framework to classify all additive decompositions of proper multiplicative subgroups of prime fields, showing that the only nontrivial example is the subgroup of order $4$. We give a new self-contained proof of this classification that streamlines the arguments of Kalmynin and of Rudnev and Tyrrell. At the heart of the proof are two new global differential identities. They give an independent proof of Kalmynin's theorem that the two summands have equal size and ultimately reduce the classification to direct coefficient comparisons, avoiding the residue calculations and subsequent arithmetic analysis in the earlier arguments.
\end{abstract}

\maketitle

\section{Introduction}
Throughout, let $p$ be a prime, let $\F_p$ be the finite field with $p$ elements, and write $\F_p^*=\F_p\setminus\{0\}$.
Denote $\mathcal R_p:=\{x^2:x\in\F_p^*\}$ following \cite{LS17, S12}.

A fundamental theme in arithmetic combinatorics is the interplay between addition and multiplication.  Classical manifestations include Ostmann’s inverse Goldbach problem \cite{Ostmann} (which predicts that the set of primes cannot be written, up to finitely many exceptions, as a nontrivial sumset) and Erd\H{o}s's conjecture on additive decompositions of small perturbations of the set of perfect squares \cite{SS65}. Both conjectures remain  open.

In this paper, we study a finite-field analogue of this theme. 
A celebrated conjecture of S\'ark\"ozy~\cite{S12} asserted that, for every sufficiently large prime $p$, the subgroup $\mathcal R_p$ of nonzero quadratic residues in $\F_p$ has no nontrivial additive decomposition, that is, $\mathcal R_p$ cannot be written as a sumset \[A+B=\{a+b: a\in A,b\in B\}\] 
for some subsets $A,B\subseteq \F_p$ with $|A|,|B|\geq2$. 
The problem has seen remarkable progress in recent years.
Hanson and Petridis~\cite{HP} established the conjecture for almost all primes using a Stepanov-type polynomial method, and Kalmynin~\cite{Kalmynin} subsequently resolved S\'ark\"ozy's conjecture.
Related additive and multiplicative decomposition problems have also been studied extensively; see, for example, \cite{S12,S13,S14,Sh14,S16,LS17,S20,HP,KYY,Y24,Y25,Kalmynin,KYY26,SY26}.

This naturally leads to the corresponding problem for arbitrary proper
multiplicative subgroups of prime fields, often referred to as the
\emph{generalized S\'ark\"ozy conjecture}: for each fixed index $r\geq 2$, if $p \equiv 1 \pmod r$ is a sufficiently large prime, then  the multiplicative subgroup $H$ of $\F_p^*$ of index $r$ admits no nontrivial additive decomposition.

In the same paper, Kalmynin \cite{Kalmynin} also developed a general framework for additive decompositions of multiplicative subgroups, proving in particular that the two summands must have equal size (\cref{thm:alpha=beta}).
Most recently, Rudnev and Tyrrell~\cite{RT26} built on this framework to classify all additive decompositions
$H=A+B$ of proper multiplicative subgroups of prime fields in the following theorem.

\begin{theorem}\label{thm:main}
Let $p$ be an odd prime and let $H$ be a proper multiplicative subgroup of $\mathbb F_p^*$. If
\[
H=A+B
\]
for subsets $A,B\subseteq\mathbb F_p$ with $|A|,|B|\geq 2$, then 
\[
|A|=|B|=2 \qquad \text{and} \qquad |H|=4.
\]
\end{theorem}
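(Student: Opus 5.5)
We work with the polynomial $\prod_{h\in H}(x-h)=x^n-1$, where $n=|H|$ and $n\mid p-1$, $n<p-1$. Set $f_A(x)=\prod_{a\in A}(x-a)$ and $f_B(x)=\prod_{b\in B}(x-b)$. The hypothesis $A+B=H$ says that for every $b\in B$ the polynomial $(x-b)^n-1$ vanishes on $A$. Hence $f_A(x)$ divides $(x-b)^n-1$ for each $b\in B$, and symmetrically $f_B(x)$ divides $(x-a)^n-1$ for each $a\in A$.

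First we reduce to the case where these divisibilities are as rigid as possible. Since $(x-b)^n-1$ is squarefree, each divisibility is a statement about root sets only. The key device is to differentiate in the shift parameter. Consider the two-variable polynomial $F(x,y)=(x-y)^n-1$ restricted to $A\times B$, where it vanishes identically. Expanding $(x-y)^n$ by the binomial theorem gives $\sum_k \binom nk x^k(-y)^{n-k}$. Reducing the powers $x^k$ modulo $f_A$ and $y^j$ modulo $f_B$ yields an identity in the quotient ring $\F_p[x]/(f_A)\otimes\F_p[y]/(f_B)$.

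Applying $\partial_x+\partial_y$ kills $(x-y)^n$. Applying $x\partial_x+y\partial_y$ multiplies it by $n$. Comparing these two derivations against the reduced expressions gives two global identities, which should take the shape
\[
n\,f_A'(x)\,f_B(y)\;\equiv\;\ldots,
\]
namely relations between $f_A'$, $f_B'$ and the constants $\sum a$, $\sum b$, $|A|$, $|B|$. From the identity for $\partial_x+\partial_y$, comparing leading terms should force $|A|=|B|$ (this recovers \cref{thm:alpha=beta}), since $|A||B|\ge |H|=n$ and the degrees must match. Write $k=|A|=|B|$. Counting then gives $n\le k^2$. Moreover, Kalmynin-type bounds show that the number of representations of each $h\in H$ as $a+b$ is constant, hence equal to $k^2/n$.

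Next we translate so that $\sum_{a\in A} a=0$; this is allowed since replacing $(A,B)$ by $(A+t,B-t)$ preserves the sumset. Comparing the coefficients of $x^{k-1},x^{k-2},\dots$ in the identity coming from $x\partial_x+y\partial_y$ should then show that the power sums $\sum a^j$ and $\sum b^j$ vanish for $1\le j<$ some bound, and in particular that $f_A(x)=x^k-\alpha$ and $f_B(x)=x^k-\beta$ in a suitable range. Then $A$ and $B$ are cosets of the group $\mu_k$ of $k$-th roots of unity, so $k\mid p-1$, and $A+B=H$ is invariant under multiplication by $\mu_k$.

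Finally, a direct coefficient comparison applies. Requiring $\zeta\alpha_0+\beta_0\in H$ for all $\zeta\in\mu_k$ means that $x^k-\alpha$ divides $(x+\beta_0)^n-1$. Comparing low-order coefficients together with $k^2\ge n$ and $n<p-1$ should leave only the case $k=2$, $n=4$. An example confirms this case: $\{\pm1,\pm i\}=\{a,-a\}+\{b,-b\}$ works when $a^2+b^2=0$ and $2ab=\pm i$ up to scaling.

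The main obstacle is the middle step: extracting from the differential identities enough vanishing of the power sums to pin $f_A$ and $f_B$ down to binomials. Proving this requires care with the characteristic, since the binomial coefficients $\binom nk$ and the factors $j$ could vanish mod $p$. We expect to use $k\le n<p$ to keep all the relevant integers invertible.
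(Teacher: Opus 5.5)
Your proposal has a genuine gap. None of the three decisive steps is carried out: $|A|=|B|$, the binomial form of $f_A,f_B$, and the exclusion of $k\ge 3$. The mechanism you suggest for them cannot work as described.

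\textbf{The starting input and the derivation step.} Your input is the vanishing of $(x+b)^n-1$ on $A$ for every $b\in B$ (note the sign: $a+b\in H$ gives $(a+b)^n=1$, so it is $(x+b)^n-1$, not $(x-b)^n-1$). This is equivalent to $A+B\subseteq H$ and nothing more, so identities derived from it are blind to the covering condition $A+B=H$, which is what forces the rigidity.

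Moreover, $\partial_x+\partial_y$ and $x\partial_x+y\partial_y$ do not preserve the ideal $(f_A(x),f_B(y))$. For instance, $\partial_x f_A=f_A'$ is a unit modulo $f_A$ because the roots are simple. So these operators do not act on $\F_p[x]/(f_A)\otimes\F_p[y]/(f_B)$. Differentiating a representation $(x+y)^n-1=Uf_A+Vf_B$ only gives relations involving the non-canonical cofactors $U,V$, and no identity of the shape you anticipate is ever produced.

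"Comparing leading terms should force $|A|=|B|$" is Kalmynin's theorem, stated without an argument. You also cite, rather than prove, the bound that makes representations unique, and you leave $n<k^2$ open.

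\textbf{The missing idea.} What you need is the Stepanov-type auxiliary polynomial $\Psi(X)=\sum_{a\in A}c_a(X+a)^{d+\alpha-1}-1$, with $c_a=\prod_{a'\neq a}(a-a')^{-1}$. Lagrange coefficient extraction shows that $\deg\Psi=d$ and that $\Psi$ vanishes to order $\alpha$ at each $b\in B$. This gives $\alpha\beta\le d$, hence $d=\alpha\beta$ with unique representations, and the exact identity $\Psi=\binom{d+\alpha-1}{\alpha-1}F^\alpha$.

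Expanding this identity at $X=b+Y$ to orders $Y^{\alpha+1}$ and $Y^{\alpha+2}$ gives local relations at each root. These globalize, by degree and by vanishing at all roots of $FG$, to $\alpha(\alpha+1)GF''+\beta(\beta+1)FG''=2(d-1)F'G'$ and to the third-order identity. Then $\alpha=\beta$ follows from the coefficient of $X^{\alpha+\beta-s-2}$, where $s$ is the first index with $p_s(A)\neq0$.

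\textbf{The later steps.} These are likewise only asserted, and they aim at more than the paper obtains. You claim $f_A=x^k-c$, i.e.\ that $A$ and $B$ are cosets of $\mu_k$. The paper gets only $F,G\in\F_p[X^n]$ for the first index $n\ge4$ with $p_n(A)\neq0$. Even that needs both identities:
\begin{enumerate}
\item the second-order identity shows that $n$, and any second index $m$ with $n\nmid m$, are even;
\item the third-order identity gives $\Phi_\alpha(n)\equiv\Phi_\alpha(m)\equiv0\pmod p$;
\item an elimination then shows that $m$ cannot exist.
\end{enumerate}

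Even granting your coset structure, the endgame is not a routine comparison. The moment relations $\sum_{h\in H}h^j=0$ for $j=k,2k$ give $a_0^k=-b_0^k$ and $\binom{2k}{k}\equiv2\pmod p$. A size argument alone does not rule this out, since $\binom{2k}{k}$ can exceed $p$. In the paper the contradiction is immediate: the lowest nonconstant coefficients of the two global identities give $g_0f_\nu+f_0g_\nu=0$ and $g_0f_\nu-f_0g_\nu=0$, forcing $f_\nu=g_\nu=0$.

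\textbf{Your example.} If $a^2+b^2=0$ then $(a\pm b)^2=\pm2ab$, so you need $2ab=\pm1$, e.g.\ $a=(1+i)/2$ and $b=(1-i)/2$. With $2ab=\pm i$ the four sums have order $8$ and do not lie in $H$.
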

Conversely, the case $|H|=4$ is exceptional: \[H=\{1,-1,i,-i\}=\{0,-1-i\}+\{1,i\},\] where $i\in H$ has order $4$.
The assumption that $H$ is proper is also essential: for every prime $p\geq5$, one has
\[
\F_p^*=\left\{0,\frac{p-1}{2}\right\}+\left\{1,2,\ldots,\frac{p-1}{2}\right\}.
\]

For the remainder of the paper, let $H$ be a proper multiplicative subgroup of $\F_p^*$ with a nontrivial decomposition $H=A+B$, where $A,B\subseteq\F_p$ and $|A|,|B|\geq2$. Write
\[
d=|H|,\qquad \alpha=|A|,\qquad \text{and} \qquad  \beta=|B|.
\]

A central ingredient in the proof of \cref{thm:main} is the following structural theorem. Hanson and Petridis  established $\alpha\beta=d$ based on an elegant polynomial method, and Kalmynin \cite{Kalmynin} subsequently proved $\alpha=\beta$ using an ingenious new idea. 
\begin{theorem}\label{thm:alpha=beta}
We have $\alpha\beta=d$ and $\alpha=\beta$. 
\end{theorem}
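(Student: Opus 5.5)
The plan is to encode the decomposition by polynomials and compare them with $x^d-1$. Put
\[
f(x)=\prod_{a\in A}(x-a)\qquad\text{and}\qquad g(x)=\prod_{b\in B}(x-b).
\]
The hypothesis $H=A+B$ says that for every $a\in A$ and $b\in B$ we have $(a+b)^d=1$.

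\textbf{Step 1: the bound $\alpha\beta\ge d$.} This holds trivially, since $|A+B|\le\alpha\beta$ and $A+B=H$ has $d$ elements.

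\textbf{Step 2: the bound $\alpha\beta\le d$.} I would follow Hanson--Petridis and use a Stepanov-type auxiliary polynomial. For each fixed $b\in B$, the polynomial $(x+b)^d-1$ vanishes on all of $A$, so $f(x)\mid (x+b)^d-1$.

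The aim is to build a nonzero polynomial
\[
F(x)=\sum_{j} c_j(x)\,(x+b_j)^{d}
\]
with low-degree coefficients $c_j$, where the $b_j$ run over $B$, arranged so that $F$ vanishes to order at least $\beta$ at every point of $A$. Vanishing to high order is checked with derivatives, using that $\frac{d}{dx}(x+b)^d=d(x+b)^{d-1}$ and that $(x+b)^d=1$ on $A$. Counting constraints against free coefficients shows such an $F$ exists, provided $\alpha\beta>d$. Comparing the degree of $F$ (roughly $d+$ small) with the number of zeros counted with multiplicity (at least $\alpha\beta$) then forces $\alpha\beta\le d$.

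The delicate part is the nonvanishing of $F$. This uses $d<p-1$ (properness of $H$) and a Wronskian / linear-independence argument for the functions $(x+b)^d$. Together with Step 1 this gives $\alpha\beta=d$.

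\textbf{Step 3: exact factorization and sum-freeness.} Once $\alpha\beta=d$, every element of $H$ has a unique representation $a+b$. Hence
\[
\prod_{b\in B} f(x-b)=\prod_{h\in H}(x-h)=x^d-1.
\]

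\textbf{Step 4: the equality $\alpha=\beta$.} Apply the symmetric identity $\prod_{a\in A} g(x-a)=x^d-1$ and take logarithmic derivatives:
\[
\sum_{b\in B}\frac{f'(x-b)}{f(x-b)}=\frac{d\,x^{d-1}}{x^d-1},
\]
and the analogous identity with $g$. I would expand both sides in powers of $1/x$ at infinity and compare coefficients. The right-hand side is $d\sum_{k\ge 0} x^{-1-dk}$, so all power sums of order not divisible by $d$ vanish. Concretely,
\[
\sum_{a\in A}\sum_{b\in B}(a+b)^m=0 \qquad\text{for } 1\le m<d.
\]
Expanding binomially gives relations among the power sums $S_i(A)$ and $S_j(B)$. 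Translating so that $S_1(A)=S_1(B)=0$, the first nonvanishing power sums must cancel, and combined with Newton's identities this forces $f(x)=x^\alpha-c$ and $g(x)=x^\beta-c'$ up to the translation.

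Equality of sizes would then come from comparing degrees in
\[
\prod_{b}\bigl((x-b)^\alpha-c\bigr)=x^d-1.
\]
Since the elements $b$ must form a coset of the $\beta$-th roots of a constant, the mixed terms vanish only when $\alpha=\beta$.

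\textbf{Main obstacle.} I expect Step 4 to be the hard part, namely extracting $\alpha=\beta$ from the power-sum vanishing. In characteristic $p$, Newton's identities break down once the indices reach $p$, and all relevant indices are below $d<p$, so the argument has to stay in that range. Where the coefficient comparison stalls, my first attempt would be a global differential identity: compare $x f'(x)$-type combinations with the derivative of $x^d-1$.
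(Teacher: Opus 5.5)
Your Step~4 has a genuine gap, and it is exactly where the content of the theorem lies.

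The identities $\sum_{i=0}^{j}\binom ji p_i(A)p_{j-i}(B)=0$ for $1\le j<d$, which you extract from the logarithmic derivative, are the paper's moment identities \eqref{eq:moments-general}. Normalize $p_1(A)=p_1(B)=0$ and let $s$ be the least index with $p_s(A)\neq0$. Then these identities give $p_j(B)=0$ for $j<s$, plus the single relation $\alpha p_s(B)+\beta p_s(A)=0$. That relation is compatible with every pair $(\alpha,\beta)$. Newton's identities hold for every set, so they add no constraints. The higher moment identities mix many unknown power sums of both sets. You give no mechanism by which any of this forces $p_j(A)=0$ for $s<j<\alpha$, let alone $f(x)=x^\alpha-c$.

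That structural claim is much stronger than what you are proving. Granting it, $\alpha=\beta$ is immediate: if $\alpha<\beta$, the moment with $j=\alpha$ equals $\alpha\beta c\neq0$. In the paper, even the weaker support statement $F,G\in\F_p[X^n]$ needs the third-order identity \eqref{eq:third-order-global} and the congruence $\Phi_\alpha(k)\equiv0\pmod p$. It also only arises on the way to a contradiction for $\alpha>2$. So Step~4 assumes, rather than proves, the result.

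The missing ingredient is a second, independent relation at level $s$. The paper gets it from the exact Hanson--Petridis factorization
$\sum_{a\in A}c_a(X+a)^{d+\alpha-1}-1=\binom{d+\alpha-1}{\alpha-1}F(X)^\alpha$.
Here $F=\prod_{b\in B}(X-b)$ is your $g$, and $G=\prod_{a\in A}(X+a)$.

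\begin{itemize}
\item Substitute $X=b+Y$ and compare the coefficients of $Y^\alpha$ and $Y^{\alpha+1}$.
\item Use the reciprocal sums $S_j(X)=\sum_a c_a(X+a)^{-j}$, which satisfy $S_1=(-1)^{\alpha-1}/G$ and $S_2/S_1=G'/G$.
\item This gives $2(d-1)G'(b)/G(b)=\alpha(\alpha+1)F''(b)/F'(b)$ for every $b\in B$.
\item Add the symmetric relation on $-A$ and a degree count. This globalizes to $\alpha(\alpha+1)GF''+\beta(\beta+1)FG''=2(d-1)F'G'$.
\item Read off the coefficient of $X^{\alpha+\beta-s-2}$ and insert your level-$s$ moment relation. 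You get $\beta(\alpha-\beta)(s+3)p_s(A)=0$ for even $s$, and $\beta(\alpha+\beta+2)(s-1)p_s(A)=0$ for odd $s$. Hence $\alpha=\beta$.
\end{itemize}

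Your Step~2 is also too loose as written. A nonzero auxiliary polynomial of degree $d+D$ with $D>0$ only yields $\alpha\beta\le d+D$. The sharp bound comes from a specific choice: constant weights $c_a=\prod_{a'\neq a}(a-a')^{-1}$ and exponent $d+\alpha-1$. By Lagrange interpolation this choice does three things:
\begin{itemize}
\item it kills the top $\alpha-1$ coefficients;
\item it leaves leading coefficient $\binom{d+\alpha-1}{\alpha-1}\neq0$ in degree exactly $d$, so nonvanishing is automatic and no Wronskian is needed;
\item it makes every $b\in B$ a zero of order at least $\alpha$.
\end{itemize}
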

To prove $\alpha=\beta$, Kalmynin \cite{Kalmynin} applied fractional linear transformations to the exact Hanson--Petridis polynomial factorization and compared successive coefficients of the transformed identity.  This produced two reciprocal-sum identities, called Relations X and Y.  Relation X, together with residue calculations for differential forms on
$\mathbb P^1$, yields the equality $\alpha=\beta$, while Relation Y is used with further
residue calculations to obtain the finer arithmetic information needed to settle S\'ark\"ozy's conjecture for quadratic residues. 

In their proof of Theorem~\ref{thm:main}, Rudnev and Tyrrell~\cite{RT26} used Theorem~\ref{thm:alpha=beta} as a key structural input. Starting from a modified form of the Hanson--Petridis polynomial identity, they passed to reciprocal sets and derived first- and second-order reciprocal-transfer identities corresponding to Kalmynin's two relations. Together with symmetric-sum calculations, these yield a polynomial congruence valid uniformly for multiplicative subgroups of arbitrary index. They then used a root-uniqueness argument to obtain the required power-sum support, followed by a separate arithmetic analysis of the congruence to complete the proof.

Before learning of the preprint of Rudnev and Tyrrell, the second
author independently obtained the same classification by a residue-based argument derived from Kalmynin's reciprocal relations; see Remark~\ref{rem:K}. 

In this paper, we give a unified and self-contained polynomial proof of Theorems~\ref{thm:main} and~\ref{thm:alpha=beta} that streamlines the arguments of Kalmynin and of Rudnev and Tyrrell. Starting from the Hanson--Petridis auxiliary polynomial and its exact factorization, we derive a family of mixed moment identities for the power sums of $A$ and $B$, and introduce the associated root polynomials
\[
F(X):=\prod_{b\in B}(X-b) \qquad\text{and}\qquad G(X):=\prod_{a\in A}(X+a).
\]
The main new ingredient in our proof is a pair of global second- and third-order differential identities for $F$ and $G$. The local information underlying these identities is closely related to Kalmynin's Relations~X and~Y, but the global polynomial form allows it to be exploited directly by coefficient comparison.

Together with the mixed moment identities, the second-order differential identity gives an independent proof that $\alpha=\beta$. 
When $\alpha>2$, suitable coefficient comparisons in the two differential identities then yield the key power-sum congruence, from which a simple algebraic elimination gives the required support restriction. This support restriction forces 
\[
F,G\in\F_p[X^n]
\]
for some $n\geq4$. Comparing the lowest nonconstant terms in the same two differential identities then gives an immediate contradiction. 
Thus the same global identities are effective both near their leading terms and at the lowest nonconstant terms of the root polynomials.

We organize the paper as follows. In \cref{sec:algebraic}, we develop the algebraic framework, derive the second- and third-order differential identities, and prove \cref{thm:alpha=beta}. In
\cref{sec:key-congruence}, these identities yield the key power-sum congruence and the resulting support restriction. Finally, in \cref{sec:completion}, we compare the lowest-degree terms in the same identities to prove \cref{thm:main}.

\subsection*{Notation}
We work with polynomials over finite fields. For a polynomial $P(X)=\sum_{j\geq0}a_jX^j$ over $\mathbb F_p$, we write $[X^j]P(X):=a_j$ for each $j\geq 0$. For a set $S\subseteq\mathbb F_p$ and an integer $j\geq0$, write
\[
p_j(S):=\sum_{s\in S}s^j \qquad \text{and} \qquad e_j(S):=\sum_{\substack{T\subseteq S\\ |T|=j}}\prod_{t\in T}t,
\]
with $p_0(S)=|S|$ and $e_0(S)=1$. Thus $p_j(S)$ and $e_j(S)$ are the \emph{$j$th power sum} and the \emph{$j$th elementary symmetric function} of $S$, respectively.

\section{Algebraic framework}\label{sec:algebraic}
We begin by collecting the elementary identities that will be used throughout the paper.
We then apply the Hanson--Petridis auxiliary polynomial to derive an exact factorization and a second-order differential identity. Together, these yield a self-contained proof of Theorem~\ref{thm:alpha=beta}. We then derive a third-order differential identity for use in the next section.

\subsection{Preliminaries}
For a set $S \subset \F_p$, recall that Newton's identities state that
\begin{equation}\label{eq:newton-identities}
j e_j(S)=\sum_{i=1}^j(-1)^{i-1}e_{j-i}(S)p_i(S)\qquad \text{for }\; 1\leq j\leq |S|.
\end{equation}
We shall repeatedly use Newton's identities throughout the paper.
The following lemma is a consequence of Newton's identities.

\begin{lemma}\label{lem:newton-consequences}
Let $S\subseteq\F_p$ with $|S|<p$.
\begin{enumerate}
\item Let $1\leq \ell\leq |S|$. If $p_j(S)=0$ for $1\leq j<\ell$, then $e_j(S)=0$ for $1\leq j<\ell$ and
\[
\ell e_\ell(S)=(-1)^{\ell-1}p_\ell(S).
\]
\item If $|S|\geq2$, then $p_j(S)\neq0$ for some $1\leq j\leq |S|$.
\item Let $n\geq1$ and $N\leq |S|$. If $p_j(S)=0$ whenever $1\leq j\leq N$ and $n\nmid j$, then $e_j(S)=0$ whenever $1\leq j\leq N$ and $n\nmid j$.
\item Let $n\geq1$ and $2\leq k\leq |S|$, and suppose that either $k=n$ or $n\nmid k$. If $p_j(S)=0$ whenever $1\leq j<k$ and $n\nmid j$, then
\[
k e_k(S)=(-1)^{k-1}p_k(S).
\]
\end{enumerate}
\end{lemma}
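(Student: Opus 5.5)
The plan is to derive all four parts by induction directly from Newton's identities \eqref{eq:newton-identities}. One preliminary remark is used throughout: every index $j$ involved satisfies $1\leq j\leq |S|<p$, so $j$ is invertible in $\F_p$. This lets us divide the identity $je_j(S)=\dots$ by $j$. This is the only place the hypothesis $|S|<p$ enters, and checking it is the only real subtlety; the rest is bookkeeping of which terms in Newton's sum vanish.

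For (1), induct on $j$ for $1\leq j<\ell$. In $je_j(S)=\sum_{i=1}^{j}(-1)^{i-1}e_{j-i}(S)p_i(S)$ every $p_i(S)$ with $i\leq j<\ell$ vanishes, so $je_j(S)=0$ and hence $e_j(S)=0$. For $j=\ell$, the terms with $i<\ell$ vanish because $p_i(S)=0$. Only $i=\ell$ survives, and it gives $\ell e_\ell(S)=(-1)^{\ell-1}e_0(S)p_\ell(S)=(-1)^{\ell-1}p_\ell(S)$. For (2), argue by contradiction. If $p_j(S)=0$ for all $1\leq j\leq |S|$, the same induction (now allowed up to $j=|S|$) gives $e_j(S)=0$ for all $1\leq j\leq|S|$. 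Then $\prod_{s\in S}(X-s)=X^{|S|}$, so every element of $S$ equals $0$, which contradicts $|S|\geq2$ since the elements of $S$ are distinct.

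For (3), induct on $j\in[1,N]$ with $n\nmid j$, assuming the claim for all smaller such indices. Consider a term $(-1)^{i-1}e_{j-i}(S)p_i(S)$ in Newton's identity. If $n\nmid i$, then $p_i(S)=0$. If $n\mid i$, then $i\neq j$, so $1\leq j-i<j$, and also $n\nmid (j-i)$. The induction hypothesis then gives $e_{j-i}(S)=0$. Hence $je_j(S)=0$, and dividing by $j$ finishes the step.

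For (4), write $ke_k(S)=\sum_{i=1}^k(-1)^{i-1}e_{k-i}(S)p_i(S)$. The term $i=k$ is $(-1)^{k-1}p_k(S)$, so it remains to show that every term with $i<k$ vanishes. If $n\nmid i$, then $p_i(S)=0$ by hypothesis. If $n\mid i$ with $1\leq i<k$, this is impossible when $k=n$. When $n\nmid k$, we have $n\nmid(k-i)$ and $1\leq k-i\leq k-1$. Applying (3) with $N=k-1\leq|S|$, whose hypothesis is exactly ours, gives $e_{k-i}(S)=0$.
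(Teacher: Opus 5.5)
Your proposal is correct and matches the paper's proof essentially step for step. Like the paper, you get (1) and (2) from term-by-term vanishing in Newton's identities, prove (3) by the same induction on $j$, and derive (4) by applying (3) with $N=k-1$. Your explicit remark that dividing by $j\leq|S|<p$ is legitimate just spells out what the paper uses implicitly.
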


\begin{proof}
For part~\textup{(1)}, equation~\eqref{eq:newton-identities} gives $e_j(S)=0$ successively for $1\leq j<\ell$. At $j=\ell$, only the term with $i=\ell$ remains, giving the stated identity.

For part~\textup{(2)}, if $p_j(S)=0$ for $1\leq j\leq |S|$, then part~\textup{(1)} gives $e_j(S)=0$ for every $1\leq j\leq |S|$. Thus $\prod_{s\in S}(X-s)=X^{|S|}$, so $S\subseteq\{0\}$, contrary to $|S|\geq2$.

For part~\textup{(3)}, we argue by induction on $j$. Let $1\leq j\leq N$ with $n\nmid j$, and assume the assertion for all smaller indices. In the sum on the right-hand side of equation~\eqref{eq:newton-identities}, $p_i(S)=0$ when $n\nmid i$. When $n\mid i$, we have $n\nmid j-i$, so $e_{j-i}(S)=0$ by induction. Thus every summand vanishes, and thus $j e_j(S)=0$. Since $j\leq |S|<p$, it follows that $e_j(S)=0$.

For part~\textup{(4)}, part~\textup{(3)}, applied with $N=k-1$, gives $e_j(S)=0$ whenever $1\leq j<k$ and $n\nmid j$. In equation~\eqref{eq:newton-identities} with $j=k$, every term with index $i<k$ vanishes: if $n\nmid i$, then $p_i(S)=0$; if $n\mid i$, then either $k=n$, in which case no such $i$ exists, or $n\nmid k$, in which case $n\nmid k-i$ and thus $e_{k-i}(S)=0$. Thus only the term with $i=k$ remains. Since $e_0(S)=1$, we obtain the required identity.
\end{proof}

We shall also use the standard orthogonality relation for the sums of powers over a multiplicative subgroup.
\begin{lemma}\label{lem:subgroup-power-sums}
Let $K\leq\F_p^*$ be a multiplicative subgroup of order $e$. For every integer $j\geq0$,
\[
\sum_{x\in K}x^j=\begin{cases}
e,&e\mid j,\\
0,&e\nmid j.
\end{cases}
\]
\end{lemma}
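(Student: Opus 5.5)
The plan is to use that $K$ is cyclic and then sum a geometric series. Since $\F_p^*$ is cyclic, so is $K$; fix a generator $g$ of $K$, so that $K=\{g^0,g^1,\ldots,g^{e-1}\}$ and $g$ has multiplicative order exactly $e$. Then
\[
\sum_{x\in K}x^j=\sum_{t=0}^{e-1}(g^j)^t .
\]
Set $\omega:=g^j$.

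First case: $e\mid j$. Then $\omega=1$, so every term equals $1$ and the sum is $e$. The value $e$ is read in $\F_p$, which is exactly how the statement is meant. The case $j=0$ falls under this case, with the convention $0^0=1$ not needed since $0\notin K$.

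Second case: $e\nmid j$. Then $\omega\neq1$, because $g$ has order exactly $e$. Multiply the sum $S:=\sum_{t=0}^{e-1}\omega^t$ by $\omega-1$; the sum telescopes to
\[
(\omega-1)S=\omega^e-1=(g^e)^j-1=0 .
\]
Since $\omega-1\neq0$ in the field $\F_p$, this forces $S=0$.

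An alternative argument for the second case avoids choosing a generator. Pick $y\in K$ with $y^j\neq1$, which exists because otherwise every element of $K$ would have order dividing $j$, so $e\mid j$ by cyclicity. The map $x\mapsto yx$ permutes $K$, so $y^jS=S$, and hence $S=0$.

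There is no real obstacle here. The only point needing justification is the cyclicity of $K$, which is standard (every subgroup of a cyclic group is cyclic, or more directly, a finite subgroup of the multiplicative group of a field is cyclic).
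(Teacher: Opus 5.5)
Your proof is correct and matches the paper's argument. Both fix a generator $g$ of the cyclic group $K$, rewrite the sum as $\sum_{t=0}^{e-1}(g^j)^t$, note that $g^j=1$ exactly when $e\mid j$, and show the geometric sum vanishes via $(g^e)^j-1=0$ otherwise; your alternative translation-invariance argument is also valid but not needed.
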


\begin{proof}
Let $h$ be a generator of $K$. Then $\sum_{x\in K}x^j=\sum_{\ell=0}^{e-1}(h^j)^\ell$. If $e\mid j$, every term is $1$; otherwise $h^j\neq1$ and the geometric-series formula gives $\sum_{\ell=0}^{e-1}(h^j)^\ell=((h^e)^j-1)/(h^j-1)=0$.
\end{proof}

For each $a\in A$, put
\[
c_a:=\prod_{\substack{a'\in A\setminus \{a\}}}(a-a')^{-1}.
\]

We first record a basic property of the root sets of $F$ and $G$.
\begin{lemma}\label{lem:FG-coprime}
The roots of $F$ and $G$ are simple, and the two polynomials have no common root. 
\end{lemma}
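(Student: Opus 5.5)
Since $F(X)=\prod_{b\in B}(X-b)$ and $G(X)=\prod_{a\in A}(X+a)$ run over \emph{sets}, $F$ has the $\beta$ distinct roots $b\in B$ and $G$ has the $\alpha$ distinct roots $-a$ for $a\in A$. So both polynomials are separable, and simplicity of the roots is immediate from the definitions.

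The substantive part is that $F$ and $G$ have no common root. A common root would be an element $b\in B$ with $b=-a$ for some $a\in A$, i.e.\ $a+b=0$. But $a+b\in A+B=H\subseteq\F_p^*$, so $a+b\neq 0$. Hence no such pair exists and $\gcd(F,G)=1$.

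I expect no real obstacle. The one point to flag is the sign convention: the factor $X+a$ in $G$ means the roots of $G$ are $-A$, and this is precisely what turns ``common root'' into ``$0\in A+B$''. That case is ruled out because the multiplicative subgroup $H$ does not contain $0$.
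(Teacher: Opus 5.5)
Your proof is correct and is essentially the paper's argument: the roots of $F$ and $G$ are the distinct elements of $B$ and $-A$, and a common root $b=-a$ would give $0=a+b\in H$, which is impossible.
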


\begin{proof}
The roots of $F$ and $G$ are $B$ and $-A$, respectively. Since $A$ and $B$ are sets, all these roots are simple. A common root would have the form $b=-a$ with $a\in A$ and $b\in B$, giving $0=a+b\in H$, a contradiction.
\end{proof}
In particular, $F'(b)$ and $G(b)$ are nonzero for each $b\in B$. Thus, in the following proof, division by $F'(b)$ and $G(b)$ is valid and we will not repeat this fact.

The following identity will be used repeatedly in the construction and analysis of the auxiliary polynomial.
\begin{lemma}\label{lem:coefficient-extraction}
For every polynomial $P$ of degree at most $\alpha-1$,
\begin{equation}\label{eq:lagrange-general}
\sum_{a\in A}c_aP(a)=[X^{\alpha-1}]P(X).
\end{equation}
In particular,
\begin{equation}\label{eq:lagrange}
\sum_{a\in A}c_a a^j=
\begin{cases}
0,&0\leq j\leq\alpha-2,\\
1,&j=\alpha-1.
\end{cases}
\end{equation}
\end{lemma}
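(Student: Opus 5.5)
The plan is to use Lagrange interpolation on the node set $A$. Since the elements of $A$ are distinct and $|A|=\alpha$, every polynomial $P$ of degree at most $\alpha-1$ equals its interpolant:
\[
P(X)=\sum_{a\in A}P(a)\prod_{a'\in A\setminus\{a\}}\frac{X-a'}{a-a'}=\sum_{a\in A}c_aP(a)\prod_{a'\in A\setminus\{a\}}(X-a').
\]
This holds because the difference of the two sides has degree at most $\alpha-1$ and vanishes at the $\alpha$ distinct points of $A$, hence is the zero polynomial. Here $c_a$ is well defined, since $a-a'\neq0$ for $a'\neq a$.

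Next, compare the coefficients of $X^{\alpha-1}$ on both sides. Each product $\prod_{a'\neq a}(X-a')$ is monic of degree $\alpha-1$, so the right-hand side contributes $\sum_{a\in A}c_aP(a)$. The left-hand side contributes $[X^{\alpha-1}]P(X)$. This proves \eqref{eq:lagrange-general}.

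For \eqref{eq:lagrange}, take $P(X)=X^j$ with $0\leq j\leq\alpha-1$. Its coefficient of $X^{\alpha-1}$ is $1$ if $j=\alpha-1$ and $0$ otherwise.

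An alternative route is the partial fraction decomposition of $P(X)/\prod_{a\in A}(X-a)$: its residue at $a$ is $c_aP(a)$, and one compares the coefficients of $1/X$ at infinity. The interpolation argument is shorter, however. There is no real obstacle; the only point to check is that the interpolation identity holds over $\F_p$. It does, because a nonzero polynomial of degree at most $\alpha-1$ has at most $\alpha-1$ roots in any field.
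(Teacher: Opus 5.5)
Your proof is correct and is essentially the paper's argument. The paper also interpolates $P$ on the nodes $A$ and compares coefficients of $X^{\alpha-1}$, using that each $\prod_{a'\in A\setminus\{a\}}(X-a')$ is monic of degree $\alpha-1$ with $c_a$ as the corresponding reciprocal derivative factor.
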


\begin{proof}
Put $R(X):=\prod_{a\in A}(X-a)$. By Lagrange interpolation,
\[
P(X)=\sum_{a\in A}P(a)\frac{R(X)}{(X-a)R'(a)}.
\]
Since $R(X)/(X-a)$ is monic of degree $\alpha-1$ and $1/R'(a)=c_a$, comparison of the coefficients of $X^{\alpha-1}$ gives equation~\eqref{eq:lagrange-general}. Taking $P(X)=X^j$ gives equation~\eqref{eq:lagrange}.
\end{proof}

We shall also use the following elementary coefficient formulas when expanding powers of a polynomial at a simple zero.
\begin{lemma}\label{lem:coefficients-of-power}
Let $P\in \F_p[Y]$. Let $m\geq2$, and suppose that
\[
P(Y)\equiv a_1Y+a_2Y^2+a_3Y^3\pmod{Y^4}.
\]
Then
\begin{align*}
[Y^m]&P(Y)^m=a_1^m, \qquad 
[Y^{m+1}]P(Y)^m=m a_1^{m-1}a_2, \quad \text{and} \\
&[Y^{m+2}]P(Y)^m=m a_1^{m-1}a_3+\binom{m}{2}a_1^{m-2}a_2^2.
\end{align*}
\end{lemma}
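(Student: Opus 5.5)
The plan is to expand $P(Y)^m$ directly, using that $P$ has no constant term. Write $P(Y)=Y\,Q(Y)$ with
\[
Q(Y)\equiv a_1+a_2Y+a_3Y^2\pmod{Y^3}.
\]
Then $P(Y)^m=Y^mQ(Y)^m$. For $k=0,1,2$ this gives $[Y^{m+k}]P(Y)^m=[Y^k]Q(Y)^m$. Only $Q$ modulo $Y^3$ matters, and the higher coefficients of $P$ cannot contribute, since they only affect $[Y^j]P^m$ for $j\geq m+3$. This reduces the statement to computing the first three coefficients of $(a_1+a_2Y+a_3Y^2)^m$.

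Next, expand $(a_1+a_2Y+a_3Y^2)^m$ by the multinomial theorem. A term is obtained by choosing, from each of the $m$ factors, one of $a_1$, $a_2Y$, $a_3Y^2$.
\begin{itemize}
\item The $Y^0$ coefficient comes only from choosing $a_1$ in every factor, which gives $a_1^m$.
\item The $Y^1$ coefficient comes from choosing $a_2Y$ in exactly one factor and $a_1$ in the rest. There are $m$ ways, giving $m a_1^{m-1}a_2$.
\item The $Y^2$ coefficient comes from two patterns. Either one factor contributes $a_3Y^2$ and the rest $a_1$, which happens in $m$ ways. Or two factors contribute $a_2Y$ and the rest $a_1$, which happens in $\binom m2$ ways. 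Together these give $m a_1^{m-1}a_3+\binom m2 a_1^{m-2}a_2^2$.
\end{itemize}

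The integer coefficients $m$ and $\binom m2$ are read in $\F_p$. The identity holds in $\mathbb Z[a_1,a_2,a_3]$ and therefore specializes to any commutative ring, so no problem arises for small $p$. The hypothesis $m\geq2$ guarantees that $a_1^{m-2}$ is a genuine monomial and that the two-factor case is meaningful.

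There is no real obstacle here. The only point needing care is the justification that terms of $P$ of degree at least $4$ do not affect $[Y^{m+2}]P^m$, and this follows from the factorization $P=YQ$ as explained above.
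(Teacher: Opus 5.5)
Your proof is correct and takes essentially the same approach as the paper, which likewise expands $P^m$ and reads off the terms from all linear factors, one quadratic factor, or one cubic factor or two quadratic factors. Your factorization $P=YQ$ is just a tidy way of making explicit that terms of degree at least $4$ in $P$ cannot contribute.
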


\begin{proof}
This follows by expanding $P^m$ and considering the corresponding coefficients. Choose, respectively, all linear terms; one quadratic term; or either one cubic term or two quadratic terms.
\end{proof}

For $j=1,2,3$, put
\[
S_j(X):=\sum_{a\in A}\frac{c_a}{(X+a)^j}.
\]

The following identities relate these reciprocal sums to $G$ and its derivatives.

\begin{lemma}\label{lem:reciprocal-sums}
We have
\begin{equation*}
S_1(X)=\frac{(-1)^{\alpha-1}}{G(X)}, \qquad \frac{S_2(X)}{S_1(X)}=\frac{G'(X)}{G(X)},
\end{equation*}
and
\begin{equation}\label{eq:S3S1}
\frac{S_3(X)}{S_1(X)}=\frac12\left[2\left(\frac{G'(X)}{G(X)}\right)^2-\frac{G''(X)}{G(X)}\right].
\end{equation}
\end{lemma}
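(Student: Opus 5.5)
The plan is to get the first identity from partial fractions and then obtain the other two by differentiating it, since $S_2$ and $S_3$ are, up to constants, derivatives of $S_1$.

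\textbf{Step 1: the formula for $S_1$.} The polynomial $G(X)=\prod_{a\in A}(X+a)$ has the simple roots $-a$, $a\in A$, and $G'(-a)=\prod_{a'\neq a}(a'-a)=(-1)^{\alpha-1}c_a^{-1}$. Partial fractions (or Lagrange interpolation of the constant polynomial $1$ at the nodes $-a$) give
\[
\frac{1}{G(X)}=\sum_{a\in A}\frac{1}{G'(-a)(X+a)}=(-1)^{\alpha-1}\sum_{a\in A}\frac{c_a}{X+a},
\]
which is the first identity, $S_1=(-1)^{\alpha-1}/G$. Equivalently, one may multiply $S_1$ by $G$ and note that $\sum_a c_a\prod_{a'\neq a}(X+a')$ has degree at most $\alpha-1$; by \cref{lem:coefficient-extraction} with the variable substitution $a\mapsto -a$, it equals the constant $(-1)^{\alpha-1}$.

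\textbf{Step 2: differentiate.} Termwise, $S_1'=-S_2$ and $S_2'=-2S_3$, so $S_3=\tfrac12 S_1''$. With $S_1=(-1)^{\alpha-1}G^{-1}$ we have
\[
S_1'=(-1)^{\alpha}G'G^{-2},\qquad S_1''=(-1)^{\alpha-1}\bigl(2G'^2G^{-3}-G''G^{-2}\bigr).
\]
Dividing by $S_1$ gives
\[
\frac{S_2}{S_1}=-\frac{S_1'}{S_1}=\frac{G'}{G},\qquad \frac{S_3}{S_1}=\frac12\cdot\frac{S_1''}{S_1}=\frac12\left[2\Bigl(\frac{G'}{G}\Bigr)^2-\frac{G''}{G}\right],
\]
which is \eqref{eq:S3S1}. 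Since $p$ is odd, division by $2$ is valid.

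\textbf{Main obstacle.} The only delicate point is the sign bookkeeping in $G'(-a)$. Each of the $\alpha-1$ factors $a'-a$ equals $-(a-a')$, and this is exactly where the factor $(-1)^{\alpha-1}$ in the first identity comes from. Everything else is formal differentiation of rational functions over $\F_p$, which is legitimate as an identity in $\F_p(X)$.
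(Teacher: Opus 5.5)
Your proof is correct and follows the paper's route: partial fractions for $S_1$, using the same sign computation $G'(-a)=(-1)^{\alpha-1}c_a^{-1}$, and then the termwise relations $S_1'=-S_2$ and $S_2'=-2S_3$. The only cosmetic difference is that you obtain $S_3/S_1$ by computing $S_1''$ directly from $S_1=(-1)^{\alpha-1}G^{-1}$, whereas the paper differentiates the identity $S_2/S_1=G'/G$; both amount to the same short computation.
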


\begin{proof}
The zeros of $G$ are the elements $-a$ with $a\in A$, so partial fractions give
\[
\frac{1}{G(X)}=\sum_{a\in A}\frac{1}{G'(-a)(X+a)}=(-1)^{\alpha-1}S_1(X),
\]
since
\[
G'(-a)=(-1)^{\alpha-1}\prod_{\substack{a'\in A\\a'\neq a}}(a-a')=\frac{(-1)^{\alpha-1}}{c_a}.
\]
Thus $S_1=(-1)^{\alpha-1}/G$. Logarithmic differentiation, together with $S_1'=-S_2$, gives
\[
\frac{S_2}{S_1}=-\frac{S_1'}{S_1}=\frac{G'}{G}.
\]
Differentiating this identity and using $S_2'=-2S_3$, we obtain
\[
\left(\frac{G'}{G}\right)'=\left(\frac{S_2}{S_1}\right)'=-2\frac{S_3}{S_1}+\left(\frac{S_2}{S_1}\right)^2.
\]
Rearranging and using $(G'/G)'=G''/G-(G'/G)^2$ gives equation~\eqref{eq:S3S1}.
\end{proof}

We conclude this subsection by recording a simple size bound that we will use repeatedly to justify divisions by integers arising in the coefficient comparisons below. Fixing $b\in B$ gives $A+b\subseteq H$, and fixing $a\in A$ gives $a+B\subseteq H$. Thus $\alpha,\beta\leq d$. Since $H$ is proper, $d$ is a proper divisor of $p-1$, so $d\leq(p-1)/2$. Therefore, every positive integer at most $d+\max\{\alpha,\beta\}-1$ is nonzero in $\F_p$; these bounds justify all subsequent divisions by such integers.

\subsection{Hanson--Petridis polynomial and its factorization}
The factorization below appears in closely related forms in Hanson--Petridis~\cite{HP} and Kalmynin~\cite{Kalmynin}. Here we present a proof of a slightly different flavor,  similar in spirit to \cite{YY26}, which will be useful in the later part of the section.

\begin{proposition}\label{prop:exact-interpolation}
We have $d=\alpha\beta$. Moreover,
\begin{equation}\label{eq:exact-general}
\sum_{a\in A}c_a(X+a)^{d+\alpha-1}-1=\binom{d+\alpha-1}{\alpha-1}F(X)^\alpha,
\end{equation}
and for every $1\leq j<d$,
\begin{equation}\label{eq:moments-general}
\sum_{i=0}^j\binom ji p_i(A)p_{j-i}(B)=0.
\end{equation}
\end{proposition}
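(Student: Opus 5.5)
The plan is to treat the left-hand side of \eqref{eq:exact-general} as an auxiliary polynomial
\[
P(X):=\sum_{a\in A}c_a(X+a)^{d+\alpha-1}-1 .
\]
I will show that it has degree exactly $d$ and vanishes to order at least $\alpha$ at every $b\in B$. Combining this with the trivial bound $d\leq\alpha\beta$ then gives both $d=\alpha\beta$ and the factorization. The moment identities \eqref{eq:moments-general} follow separately, from the fact that $d=\alpha\beta$ forces the map $(a,b)\mapsto a+b$ to be a bijection from $A\times B$ onto $H$.

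\textbf{Degree and leading coefficient.} Expanding binomially, the coefficient of $X^{d+\alpha-1-k}$ in $P$ is
\[
\binom{d+\alpha-1}{k}\sum_{a}c_aa^k ,
\]
apart from the constant $-1$, which only affects the coefficient of $X^0$. By \eqref{eq:lagrange}, these coefficients vanish for $0\leq k\leq\alpha-2$. For $k=\alpha-1$ the coefficient is $\binom{d+\alpha-1}{\alpha-1}$. This binomial coefficient is nonzero in $\F_p$, since $d+\alpha-1<p$ by the size bound recorded at the end of the preliminaries. Hence $\deg P=d$ and $P$ has leading coefficient $\binom{d+\alpha-1}{\alpha-1}$.

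\textbf{Vanishing at $B$.} Fix $b\in B$. For every $a\in A$ we have $a+b\in H$, so $(b+a)^d=1$. Hence, for $0\leq k\leq\alpha-1$,
\[
P^{(k)}(b)=(d+\alpha-1)_k\sum_a c_a(b+a)^{\alpha-1-k}-\delta_{k0},
\]
where $(d+\alpha-1)_k$ is the falling factorial. The function $a\mapsto(b+a)^{\alpha-1-k}$ is a polynomial in $a$ of degree $\alpha-1-k$.
\begin{itemize}
\item For $k=0$ this polynomial is monic of degree $\alpha-1$, so \eqref{eq:lagrange-general} gives $\sum_a c_a(b+a)^{\alpha-1}=1$, and therefore $P(b)=0$.
\item For $1\leq k\leq\alpha-1$ the degree is less than $\alpha-1$, so \eqref{eq:lagrange-general} gives a zero sum, and therefore $P^{(k)}(b)=0$.
\end{itemize}
Ordinary derivatives detect multiplicity here, because $\alpha-1<p$ and $k!\neq0$. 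So each $b$ is a root of $P$ of multiplicity at least $\alpha$. Consequently $F(X)^\alpha\mid P$. Since $P\neq0$ has degree $d$, this gives $\alpha\beta\leq d$.

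\textbf{Conclusion of the first two claims.} Conversely, $H=A+B$ implies $d=|A+B|\leq\alpha\beta$. Hence $d=\alpha\beta$. Then $P$ and $F^\alpha$ have the same degree and $F^\alpha\mid P$, so $P$ is a constant multiple of $F^\alpha$. Comparing leading coefficients gives \eqref{eq:exact-general}.

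\textbf{Moment identities.} Since $|A+B|=\alpha\beta$, the sums $a+b$ are pairwise distinct and run over $H$ exactly once. Therefore, for $1\leq j<d$,
\[
\sum_{i=0}^j\binom ji p_i(A)p_{j-i}(B)=\sum_{a\in A,\,b\in B}(a+b)^j=\sum_{h\in H}h^j .
\]
The right-hand side is $0$ by \cref{lem:subgroup-power-sums}, since $d\nmid j$. This proves \eqref{eq:moments-general}.

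The only delicate point is keeping track of which integers must be invertible in $\F_p$: the binomial coefficient $\binom{d+\alpha-1}{\alpha-1}$ and the falling factorials $(d+\alpha-1)_k$. All of them involve only integers at most $d+\alpha-1<p$, so they are nonzero. I do not expect any real obstacle beyond this bookkeeping.
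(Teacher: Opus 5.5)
Your proposal is correct and follows essentially the same route as the paper: the same auxiliary polynomial, the same Lagrange coefficient-extraction to get degree $d$ and vanishing to order $\alpha$ at each $b\in B$, and the same bijection argument for the moment identities. The only cosmetic difference is that you detect multiplicity via the derivatives $P^{(k)}(b)$ (justified since $k!\neq0$ for $k<p$), whereas the paper reads off the coefficients of $Y^j$ in $\Psi(b+Y)$ directly.
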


\begin{proof}
Let
\[
\Psi(X):=\sum_{a\in A}c_a(X+a)^{d+\alpha-1}-1
\]
denote the left-hand side of equation~\eqref{eq:exact-general}.
For $d<k\leq d+\alpha-1$, equation~\eqref{eq:lagrange} gives
\[
[X^k]\Psi=\binom{d+\alpha-1}{k}\sum_{a\in A}c_a a^{d+\alpha-1-k}=0,
\]
so $\deg\Psi\leq d$. Moreover, equation~\eqref{eq:lagrange} again yields
\[
[X^d]\Psi=\binom{d+\alpha-1}{d}\sum_{a\in A}c_a a^{\alpha-1}=\binom{d+\alpha-1}{\alpha-1}\neq0,
\]
since $d+\alpha-1<p$. Thus $\deg\Psi=d$.

We next prove that every $b\in B$ is a zero of $\Psi$ of multiplicity at least $\alpha$. Fix $b\in B$. Since $(a+b)^d=1$, equation~\eqref{eq:lagrange-general} applied to $P(X)=(X+b)^{\alpha-1}$ gives
\[
\Psi(b)=\sum_{a\in A}c_a(a+b)^{\alpha-1}-1=[X^{\alpha-1}](X+b)^{\alpha-1}-1=0.
\]
For $1\leq j<\alpha$, the coefficient of $Y^j$ in $\Psi(b+Y)$ is
\[
\binom{d+\alpha-1}{j}\sum_{a\in A}c_a(a+b)^{d+\alpha-1-j}=\binom{d+\alpha-1}{j}[X^{\alpha-1}](X+b)^{\alpha-1-j}=0,
\]
where we applied equation~\eqref{eq:lagrange-general} to $P(X)=(X+b)^{\alpha-1-j}$.
Thus $[Y^j]\Psi(b+Y)=0$ for $0\leq j<\alpha$, so $b$ is a zero of $\Psi$ of multiplicity at least $\alpha$.

It follows that $F^\alpha\mid\Psi$, and thus $\alpha\beta\leq d$. Since $d=|A+B|\leq\alpha\beta$, equality holds. Consequently, each element of $H$ has a unique representation $a+b$ with $a\in A$ and $b\in B$. Now $F^\alpha$ and $\Psi$ both have degree $d$, so comparison of leading coefficients gives equation~\eqref{eq:exact-general}. Using these unique representations and the binomial theorem, we obtain
\[
\sum_{x\in H}x^j=\sum_{a\in A, b\in B}(a+b)^j=\sum_{a\in A, b\in B}\sum_{i=0}^j\binom ji a^i b^{j-i}=\sum_{i=0}^j\binom ji p_i(A)p_{j-i}(B).
\]
By Lemma~\ref{lem:subgroup-power-sums}, the left-hand side vanishes for $1\leq j<d$, proving equation~\eqref{eq:moments-general}.
\end{proof}

We now normalize the summands. For $\lambda\in\F_p$, replacing $(A,B)$ by $(A+\lambda,B-\lambda)$ does not change their sumset. Taking $\lambda=-p_1(A)/\alpha$ and using $\alpha p_1(B)+\beta p_1(A)=0$, which follows from equation~\eqref{eq:moments-general} with $j=1$, we may assume that
\[
p_1(A)=p_1(B)=0.
\]
We use the same notation for the translated sets and the associated quantities $c_a,F,G,S_j$. Proposition~\ref{prop:exact-interpolation} remains valid.

\subsection{A second-order differential identity}
We next extract further information from the exact identity by comparing its first two nonzero coefficients at the roots of $F$ and $G$.
\begin{proposition}\label{prop:first-differential}
For every zero $b$ of $F$ and every zero $z$ of $G$,
\begin{align}
2(d-1)\frac{G'(b)}{G(b)}&=\alpha(\alpha+1)\frac{F''(b)}{F'(b)},\label{eq:local-general-F}\\
2(d-1)\frac{F'(z)}{F(z)}&=\beta(\beta+1)\frac{G''(z)}{G'(z)}.
\label{eq:local-general-G}
\end{align}
Moreover,
\begin{equation}\label{eq:second-order-general-polynomial}
\alpha(\alpha+1)GF''+\beta(\beta+1)FG''=2(d-1)F'G'.
\end{equation}

\end{proposition}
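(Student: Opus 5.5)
The plan is to read off the local identities from the two lowest-order Taylor coefficients of the exact factorization \eqref{eq:exact-general} at a root of $F$. Then I obtain the $G$-side version by exchanging the roles of $A$ and $B$. Finally I globalize by a degree count.

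\emph{Step 1: \eqref{eq:local-general-F}.} Fix $b\in B$, put $X=b+Y$ in \eqref{eq:exact-general}, and compare the coefficients of $Y^\alpha$ and $Y^{\alpha+1}$. On the left, the coefficient of $Y^j$ is $\binom{d+\alpha-1}{j}\sum_a c_a(a+b)^{d+\alpha-1-j}$. Since $(a+b)^d=1$ for all $a\in A$, this becomes $\binom{d+\alpha-1}{\alpha}S_1(b)$ for $j=\alpha$ and $\binom{d+\alpha-1}{\alpha+1}S_2(b)$ for $j=\alpha+1$. On the right, $F(b+Y)\equiv F'(b)Y+\tfrac12F''(b)Y^2\pmod{Y^3}$. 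Writing $K=\binom{d+\alpha-1}{\alpha-1}$, \cref{lem:coefficients-of-power} gives the coefficients $K\,F'(b)^\alpha$ and $K\,\alpha F'(b)^{\alpha-1}F''(b)/2$.

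Dividing the second comparison by the first is legitimate: $F'(b)\ne0$, and $S_1(b)\neq0$ by \cref{lem:reciprocal-sums} since $G(b)\ne0$. The ratio $\binom{d+\alpha-1}{\alpha+1}/\binom{d+\alpha-1}{\alpha}$ equals $(d-1)/(\alpha+1)$. Together with $S_2/S_1=G'/G$ from \cref{lem:reciprocal-sums}, this gives
\[
\frac{d-1}{\alpha+1}\,\frac{G'(b)}{G(b)}=\frac{\alpha}{2}\,\frac{F''(b)}{F'(b)},
\]
which is \eqref{eq:local-general-F}. All integers divided by are at most $d+\alpha-1$, hence nonzero in $\F_p$ by the size bound.

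\emph{Step 2: \eqref{eq:local-general-G}.} The hypotheses are symmetric in $A$ and $B$, so \cref{prop:exact-interpolation} and Step 1 apply to $(B,A)$. In that setting the roles are played by $\tilde F(X)=\prod_{a\in A}(X-a)=(-1)^\alpha G(-X)$ and $\tilde G(X)=\prod_{b\in B}(X+b)=(-1)^\beta F(-X)$. Step 1 then yields
\[
2(d-1)\tilde G'(a)/\tilde G(a)=\beta(\beta+1)\tilde F''(a)/\tilde F'(a)\qquad\text{for each }a\in A.
\]
Substituting $z=-a$, we get $\tilde G'(a)/\tilde G(a)=-F'(z)/F(z)$ and $\tilde F''(a)/\tilde F'(a)=-G''(z)/G'(z)$. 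The two sign changes cancel, giving \eqref{eq:local-general-G}. I expect this sign and normalization bookkeeping to be the only delicate point. In particular I must check that the normalization $p_1(A)=p_1(B)=0$ plays no role here, and indeed it does not, since only $(a+b)^d=1$ was used.

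\emph{Step 3: the global identity.} Let
\[
P:=\alpha(\alpha+1)GF''+\beta(\beta+1)FG''-2(d-1)F'G'.
\]
Each term has degree at most $\alpha+\beta-2$. At $b\in B$ we have $F(b)=0$, so $P(b)=\alpha(\alpha+1)G(b)F''(b)-2(d-1)F'(b)G'(b)$. This vanishes by \eqref{eq:local-general-F} after multiplying through by $F'(b)G(b)$. Likewise $P(z)=0$ for each root $z$ of $G$ by \eqref{eq:local-general-G}. By \cref{lem:FG-coprime} these are $\alpha+\beta$ distinct zeros, which exceeds $\deg P$. Hence $P=0$, which is \eqref{eq:second-order-general-polynomial}.
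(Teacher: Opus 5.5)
Your proposal is correct and follows the paper's proof essentially step for step. It uses the same comparison of the $Y^\alpha$ and $Y^{\alpha+1}$ coefficients after the shift $X=b+Y$, the same $A\leftrightarrow B$, $X\mapsto -X$ symmetry for the $G$-side identity, and the same degree-count argument to globalize. Your sign bookkeeping ($\widetilde G(X)=(-1)^\beta F(-X)$ in the unbalanced case) and your use of $S_1=(-1)^{\alpha-1}/G$ to justify $S_1(b)\neq0$ are both fine.
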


\begin{proof}
Fix $b\in B$ and substitute $X=b+Y$ in equation~\eqref{eq:exact-general}. We obtain
\begin{equation}\label{eq:111}
\sum_{a\in A}c_a(a+b+Y)^{d+\alpha-1}-1=\binom{d+\alpha-1}{\alpha-1}F(b+Y)^\alpha.
\end{equation}
On the left-hand side of equation~\eqref{eq:111}, the coefficients of $Y^\alpha$ and $Y^{\alpha+1}$ are, respectively,
\begin{align*}
\binom{d+\alpha-1}{\alpha}\sum_{a\in A}c_a(a+b)^{d-1}&=\binom{d+\alpha-1}{\alpha}S_1(b),\\
\binom{d+\alpha-1}{\alpha+1}\sum_{a\in A}c_a(a+b)^{d-2}&=\binom{d+\alpha-1}{\alpha+1}S_2(b),
\end{align*}
where we used $(a+b)^d=1$. Since $P(Y):=F(b+Y)\equiv F'(b)Y+\frac{F''(b)}{2}Y^2 \pmod {Y^3}$, for the right-hand side of equation~\eqref{eq:111}, Lemma~\ref{lem:coefficients-of-power}, applied to $P(Y)$, gives the corresponding coefficients:
\begin{align}
\binom{d+\alpha-1}{\alpha}S_1(b)&=\binom{d+\alpha-1}{\alpha-1}F'(b)^\alpha,\label{eq:S1}\\
\binom{d+\alpha-1}{\alpha+1}S_2(b)&=\binom{d+\alpha-1}{\alpha-1}\frac{\alpha}{2}F'(b)^{\alpha-1}F''(b).\label{eq:S2}
\end{align}
Note that all factors in equation~\eqref{eq:S1} are nonzero and in particular $S_1(b)\neq 0$. Dividing equation~\eqref{eq:S2} by equation~\eqref{eq:S1} and using Lemma~\ref{lem:reciprocal-sums}, we obtain
\[
\frac{\binom{d+\alpha-1}{\alpha+1}}{\binom{d+\alpha-1}{\alpha}}\frac{S_2(b)}{S_1(b)}=\frac{d-1}{\alpha+1}\frac{G'(b)}{G(b)}=\frac{\alpha}{2}\frac{F''(b)}{F'(b)},
\]
which is equivalent to equation~\eqref{eq:local-general-F}. Interchanging $A$ and $B$ and replacing $X$ by $-X$ yields equation~\eqref{eq:local-general-G}.

Define
\[
Q(X):=\alpha(\alpha+1)G(X)F''(X)+\beta(\beta+1)F(X)G''(X)-2(d-1)F'(X)G'(X).
\]
Equations~\eqref{eq:local-general-F} and~\eqref{eq:local-general-G} show that $Q$ vanishes at every root of $F$ and $G$. By Lemma~\ref{lem:FG-coprime}, $F$ and $G$ have simple roots and no common root; thus $FG\mid Q$. 
Since $\deg Q\leq\alpha+\beta-2<\alpha+\beta=\deg(FG),$ we have $Q=0$, proving equation~\eqref{eq:second-order-general-polynomial}.
\end{proof}

\subsection{Proof of Theorem~\ref{thm:alpha=beta}}
It remains to show that the two summands have equal size.

\begin{proposition}\label{prop:equal-sizes}
One has $\alpha=\beta$.
\end{proposition}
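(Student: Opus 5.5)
The plan is to compare coefficients in the second-order identity \eqref{eq:second-order-general-polynomial} just below its leading term, and to combine this with the moment identities \eqref{eq:moments-general} for the lowest index at which the power sums of $A$ or $B$ fail to vanish.

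\emph{Choosing the index $k$.} Let $k\geq 1$ be the smallest index with $p_k(A)\neq0$ or $p_k(B)\neq0$. By Lemma~\ref{lem:newton-consequences}(2), applied to $A$, such a $k$ exists with $k\leq\alpha$. Since $\beta\geq2$, we have $\alpha<\alpha\beta=d$, so $k<d$. By the normalization $p_1(A)=p_1(B)=0$, we also have $k\geq2$. Applying \eqref{eq:moments-general} with $j=k$, every cross term $\binom ki p_i(A)p_{k-i}(B)$ with $0<i<k$ vanishes, so
\[
\beta\,p_k(A)+\alpha\,p_k(B)=0 .
\]
Since $\alpha,\beta$ are nonzero in $\F_p$, both $p_k(A)$ and $p_k(B)$ are then nonzero. (In particular $k\leq\min\{\alpha,\beta\}$, because Lemma~\ref{lem:newton-consequences}(1)--(2) forces a nonzero power sum of index at most $|S|$ for each of $A$ and $B$.) Lemma~\ref{lem:newton-consequences}(1) then says the following.
\begin{itemize}
\item $F(X)=X^\beta+f_kX^{\beta-k}+(\text{lower})$ and $G(X)=X^\alpha+g_kX^{\alpha-k}+(\text{lower})$, with nothing in between.
\item $f_k=(-1)^ke_k(B)=-p_k(B)/k$ and $g_k=e_k(A)=(-1)^{k-1}p_k(A)/k$.
\end{itemize}
Hence the moment relation becomes a linear relation $\alpha f_k=\pm\beta g_k$, with a sign depending on the parity of $k$, and $f_k,g_k\neq0$.

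\emph{Coefficient comparison.} In \eqref{eq:second-order-general-polynomial}, the coefficient of $X^{\alpha+\beta-2}$ is $\alpha\beta[2\alpha\beta-2d]=0$ automatically. No monomials of degree strictly between $\alpha+\beta-2-k$ and $\alpha+\beta-2$ occur. The coefficient of $X^{\alpha+\beta-2-k}$ gives
\begin{multline*}
\alpha(\alpha+1)\bigl[(\beta-k)(\beta-k-1)f_k+\beta(\beta-1)g_k\bigr]+\beta(\beta+1)\bigl[(\alpha-k)(\alpha-k-1)g_k+\alpha(\alpha-1)f_k\bigr]\\
=2(\alpha\beta-1)\bigl[\beta(\alpha-k)g_k+\alpha(\beta-k)f_k\bigr].
\end{multline*}
This is a second linear relation between $f_k$ and $g_k$. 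After substituting $f_k=\pm\beta g_k/\alpha$ and dividing by $g_k\neq0$, we obtain a polynomial identity in $\alpha,\beta,k$ over $\F_p$.

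\emph{Main obstacle.} The expected hardest step is to simplify this identity into the form
\[
(\alpha-\beta)\cdot(\text{explicit factor})=0
\]
and then to show that the explicit factor is a nonzero integer of absolute value less than $p$. The simplification should go through because the identity is visibly symmetric under $(\alpha,f)\leftrightarrow(\beta,g)$ when $\alpha=\beta$. For the nonvanishing, I expect the factor to be a product of terms such as $k$, $k-1$, $\alpha+\beta$ or $d+\text{small}$. These are all nonzero in $\F_p$ by the size bound from the preliminaries, which says every positive integer at most $d+\max\{\alpha,\beta\}-1$ is nonzero in $\F_p$.

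For the sign, I would handle $k$ even and $k$ odd separately. I would first check the case $k=2$ by hand, where the relation is $\alpha f_2=\beta g_2$ with $f_2=e_2(B)$ and $g_2=e_2(A)$, to confirm that the factorization does isolate $\alpha-\beta$. If the odd case instead produced a relation forcing $g_k=0$, that would contradict $g_k\neq0$, which is equally good. Once the factor is shown to be nonzero, $\alpha=\beta$ in $\F_p$, and since $\alpha,\beta<p$ this gives $\alpha=\beta$ as integers.
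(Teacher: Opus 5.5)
\textbf{Verdict: same route as the paper, but the decisive step is missing.} Your setup coincides with the paper's proof. Your $k$ is the paper's $s$: the moment identities show that the first nonvanishing power sums of $A$ and $B$ occur at the same index. Your formulas for $f_k$ and $g_k$ are correct and give $\alpha f_k=(-1)^{k-1}\beta g_k$. You also compare the same coefficient of $X^{\alpha+\beta-k-2}$ in \eqref{eq:second-order-general-polynomial}, with the correct three contributions.

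The gap is that the step you call the main obstacle is all the remaining content, and you do not carry it out. The symmetry heuristic cannot replace it. It says nothing about whether the cofactor of $\alpha-\beta$ is invertible in $\F_p$, and for odd $k$ the factor $\alpha-\beta$ does not appear at all. Collecting the coefficients of $f_k$ and $g_k$ gives
\[
0=\alpha k(\alpha k+\alpha-2\beta+k-1)f_k+\beta k(\beta k+\beta-2\alpha+k-1)g_k .
\]
Substituting $\alpha f_k=(-1)^{k-1}\beta g_k$ then yields
\[
0=\beta k(\beta-\alpha)(k+3)g_k \ \text{ for $k$ even}, \qquad 0=\beta k(\alpha+\beta+2)(k-1)g_k \ \text{ for $k$ odd}.
\]
We have $2\le k\le\min\{\alpha,\beta\}$ and $p=r\alpha\beta+1$ with $r\geq2$. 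Hence $k-1$, $k+3$ and $\alpha+\beta+2$ are positive integers below $p$. So odd $k$ contradicts $g_k\neq0$, and even $k$ gives $\alpha=\beta$. This is exactly the paper's conclusion.

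\textbf{Sign error in your $k=2$ check.} The sanity check you propose at $k=2$ uses the wrong sign. From $f_2=-p_2(B)/2$, $g_2=-p_2(A)/2$ and $\beta p_2(A)+\alpha p_2(B)=0$, one gets $\alpha f_2=-\beta g_2$, not $\alpha f_2=\beta g_2$. With the sign you wrote, the identity collapses to $2\beta(\alpha+\beta+2)g_2=0$. That is a false contradiction: it would exclude the genuine example with $|H|=4$, where necessarily $k=2$. So your check would not confirm the factorization; it would instead signal an error. With the correct sign it gives $10\beta(\beta-\alpha)g_2=0$, which matches the even case above.
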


\begin{proof}
By Lemma~\ref{lem:newton-consequences}(2), there is a least positive integer $s\leq\alpha$ such that $p_s(A)\neq0$. Since $p_1(A)=0$, we have $s\geq2$. Set $t:=p_s(A)$.

For $1\leq j<s$, the minimality of $s$ gives $p_i(A)=0$ for every $1\leq i\leq j$. Equation~\eqref{eq:moments-general} therefore reduces to $\alpha p_j(B)=0$, so $p_j(B)=0$. Taking $j=s$, all intermediate terms again vanish, and thus $\alpha p_s(B)+\beta p_s(A)=0$. Therefore, $p_s(B)=-\beta t/\alpha$. Moreover, $s\leq\beta$ by Lemma~\ref{lem:newton-consequences}(2), since otherwise all of $p_1(B),\ldots,p_\beta(B)$ would vanish.

Applying Lemma~\ref{lem:newton-consequences}(1) to $A$ and $B$ with $\ell=s$, we obtain $e_j(A)=e_j(B)=0$ for $1\leq j<s$ and
\[
e_s(A)=\frac{(-1)^{s-1}p_s(A)}{s}=\frac{(-1)^{s-1}}{s}t \qquad \text{and} \qquad e_s(B)=\frac{(-1)^{s-1}p_s(B)}{s}=\frac{(-1)^s\beta}{\alpha s}t.
\]
Note that for $1\leq \ell \leq s$, we have $[X^{\beta-\ell}]F=(-1)^\ell e_\ell(B)$ and $[X^{\alpha-\ell}]G=e_\ell(A).$ Thus, the coefficients $[X^{\beta-j}]F$ and $[X^{\alpha-j}]G$ vanish for $1\leq j<s$, while
\begin{equation}\label{eq:first-nonleading-general}
u:=[X^{\beta-s}]F=\frac{\beta t}{\alpha s} \qquad \text{and} \qquad v:=[X^{\alpha-s}]G=\frac{(-1)^{s-1}t}{s}.
\end{equation}
Thus, among the terms of $F$ of degree at least $\beta-s$, only $X^\beta$ and $uX^{\beta-s}$ occur; similarly, among the terms of
$G$ of degree at least $\alpha-s$, only $X^\alpha$ and $vX^{\alpha-s}$ occur. It follows that
\begin{equation*}
\begin{aligned}
[X^{\alpha+\beta-s-2}](GF'')&=(\beta-s)(\beta-s-1)u+\beta(\beta-1)v,\\
[X^{\alpha+\beta-s-2}](FG'')&=(\alpha-s)(\alpha-s-1)v+\alpha(\alpha-1)u,\\
[X^{\alpha+\beta-s-2}](F'G')&=\alpha(\beta-s)u+\beta(\alpha-s)v.
\end{aligned}
\end{equation*}
Substituting these expressions into equation~\eqref{eq:second-order-general-polynomial} and using $d=\alpha\beta$, we obtain
\begin{align*}
0={}&\alpha(\alpha+1)\bigl((\beta-s)(\beta-s-1)u+\beta(\beta-1)v\bigr)\\
&+\beta(\beta+1)\bigl((\alpha-s)(\alpha-s-1)v+\alpha(\alpha-1)u\bigr)\\
&-2(\alpha\beta-1)\bigl(\alpha(\beta-s)u+\beta(\alpha-s)v\bigr).
\end{align*}
Collecting the coefficients of $u$ and $v$, this simplifies to
\[
0=\alpha s\bigl(\alpha s+\alpha-2\beta+s-1\bigr)u+\beta s\bigl(-2\alpha+\beta s+\beta+s-1\bigr)v.
\]
Substituting equation~\eqref{eq:first-nonleading-general} yields
\[
0=\beta t\Bigl(\alpha s+\alpha-2\beta+s-1+(-1)^s(2\alpha-\beta s-\beta-s+1)\Bigr).
\]
Therefore, simplifying separately according to the parity of $s$ yields
\begin{equation}\label{eq:balancedness-factor}
0=\begin{cases}
\beta(\alpha-\beta)(s+3)t,&s\text{ even},\\
\beta(\alpha+\beta+2)(s-1)t,&s\text{ odd}.
\end{cases}
\end{equation}
Since $d\mid p-1$ and $H$ is proper, we may write $p=r\alpha\beta+1$ with $r\geq2$. Moreover, $2\leq s\leq\min\{\alpha,\beta\}$ and $\alpha,\beta\geq2$, so $s-1$, $s+3$, and $\alpha+\beta+2$ are all positive and smaller than $p$. Thus every factor in equation~\eqref{eq:balancedness-factor}, except possibly $\alpha-\beta$, is nonzero in $\F_p$. The odd case is therefore impossible, while the even case gives $\alpha=\beta$ since $|\alpha-\beta|<p$.
\end{proof}

Together with Proposition~\ref{prop:exact-interpolation}, this proves Theorem~\ref{thm:alpha=beta}.

\subsection{A third-order differential identity}
We now derive a third-order refinement of equation~\eqref{eq:second-order-general-polynomial} in the case $\alpha=\beta$. From this point onward, assume that
\begin{equation*}
|A|=|B|=\alpha,\qquad |H|=\alpha^2,\qquad \text{and} \qquad p=r\alpha^2+1\quad \text{for } r\geq2.
\end{equation*}
In particular, equation~\eqref{eq:exact-general} becomes
\begin{equation}\label{eq:exact-balanced}
\sum_{a\in A}c_a(X+a)^{\alpha^2+\alpha-1}-1=\binom{\alpha^2+\alpha-1}{\alpha-1}F(X)^\alpha,
\end{equation}
while equation~\eqref{eq:second-order-general-polynomial} becomes
\begin{equation}\label{eq:second-order-global}
\alpha(GF''+FG'')=2(\alpha-1)F'G'.
\end{equation}
For $b\in B$, put
\[
u_b=\frac{F''(b)}{F'(b)},\qquad v_b=\frac{F'''(b)}{F'(b)},\qquad \rho_b=\frac{G'(b)}{G(b)},\qquad \text{and} \qquad \eta_b=\frac{G''(b)}{G(b)}.
\]
Specializing equation~\eqref{eq:local-general-F} gives
\begin{equation}\label{eq:local-first}
\alpha u_b=2(\alpha-1)\rho_b.
\end{equation}
Differentiating equation~\eqref{eq:second-order-global}, evaluating at $X=b$, and dividing by $F'(b)G(b)$ gives
\[
\alpha(u_b\rho_b+v_b+\eta_b)=2(\alpha-1)(u_b\rho_b+\eta_b).
\]
Equivalently,
\begin{equation}\label{eq:differentiated-local}
(2-\alpha)(u_b\rho_b+\eta_b)+\alpha v_b=0.
\end{equation}

\begin{proposition}\label{prop:third-order-identity}
We have
\begin{equation}\label{eq:third-order-global}
0=\alpha(3\alpha^2+4\alpha-1)(GF'''-FG''')-3(3\alpha^3-2\alpha^2-5\alpha+2)(G'F''-F'G'').
\end{equation}
\end{proposition}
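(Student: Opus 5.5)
Following the pattern of Proposition~\ref{prop:first-differential}, we first derive a local identity at every root of $F$, then one at every root of $G$ by symmetry, and conclude by degree counting. Let $T(X)$ be the right-hand side of equation~\eqref{eq:third-order-global}. Then $\deg T\leq 2\alpha-3<2\alpha=\deg(FG)$. By Lemma~\ref{lem:FG-coprime}, it therefore suffices to show that $T(b)=0$ for every $b\in B$ and $T(z)=0$ for every root $z$ of $G$.

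\emph{Step 1: a third local relation at $b\in B$.} Substitute $X=b+Y$ into equation~\eqref{eq:exact-balanced}, as in the proof of Proposition~\ref{prop:first-differential}, and compare the coefficients of $Y^{\alpha+2}$. On the left, using $(a+b)^{\alpha^2}=1$, this coefficient is $\binom{\alpha^2+\alpha-1}{\alpha+2}S_3(b)$. On the right, Lemma~\ref{lem:coefficients-of-power} with $a_1=F'(b)$, $a_2=F''(b)/2$, $a_3=F'''(b)/6$ gives
\[
\binom{\alpha^2+\alpha-1}{\alpha-1}\Bigl(\tfrac{\alpha}{6}F'(b)^{\alpha-1}F'''(b)+\tbinom{\alpha}{2}\tfrac14F'(b)^{\alpha-2}F''(b)^2\Bigr).
\]
Divide by equation~\eqref{eq:S1}. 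The ratio of binomials is $\frac{(\alpha^2-1)(\alpha^2-2)}{(\alpha+1)(\alpha+2)}$, where all these integers are nonzero mod $p$. Then apply equation~\eqref{eq:S3S1} of Lemma~\ref{lem:reciprocal-sums}. This yields a relation of the form
\[
\frac{(\alpha-1)(\alpha^2-2)}{\alpha+2}\cdot\tfrac12(2\rho_b^2-\eta_b)=\tfrac{\alpha}{6}v_b+\tfrac{\alpha(\alpha-1)}{8}u_b^2 .
\]

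\emph{Step 2: elimination.} Equations~\eqref{eq:local-first}, \eqref{eq:differentiated-local} and the new relation are three linear relations in $\rho_b^2,\eta_b,v_b$ once $u_b$ is expressed through $\rho_b$. Using \eqref{eq:local-first}, write $\rho_b=\alpha u_b/(2(\alpha-1))$ and eliminate $\eta_b$ via \eqref{eq:differentiated-local}. This gives $v_b$ as an explicit constant multiple $\kappa(\alpha)\,u_b^2$, and similarly $\eta_b$ and $\rho_b^2$ as constant multiples of $u_b^2$.

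Now compute $T(b)/(F'(b)G(b))$. Since $F(b)=0$, we have
\[
T(b)=\alpha(3\alpha^2+4\alpha-1)G(b)F'''(b)-3(3\alpha^3-2\alpha^2-5\alpha+2)\bigl(G'(b)F''(b)-F'(b)G''(b)\bigr).
\]
Dividing by $F'(b)G(b)$ gives
\[
\alpha(3\alpha^2+4\alpha-1)v_b-3(3\alpha^3-2\alpha^2-5\alpha+2)(\rho_bu_b-\eta_b).
\]
Every term here is a constant multiple of $u_b^2$, so the task is to check that the resulting rational function of $\alpha$ vanishes identically. The coefficients in \eqref{eq:third-order-global} are presumably exactly those that make this happen. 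Note that $3\alpha^3-2\alpha^2-5\alpha+2=(\alpha-2)(3\alpha^2+4\alpha-1)$, which suggests that the factor $\alpha-2$ coming from \eqref{eq:differentiated-local} pairs against $v_b$.

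\emph{Step 3: roots of $G$ and conclusion.} The expression $T$ is antisymmetric under interchanging $F$ and $G$. The construction is symmetric under swapping $A$ and $B$ together with $X\mapsto -X$, and third derivatives change sign under $X\mapsto-X$ as do first-times-second products. Hence the same computation gives $T(z)=0$ at roots $z$ of $G$, up to an overall sign that does not matter. Therefore $FG\mid T$, and the degree bound forces $T=0$.

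The main obstacle is the algebra in Step 2: getting the binomial ratio and the factor $\binom{\alpha}{2}a_1^{\alpha-2}a_2^2$ exactly right, and confirming that the combination cancels. I would do this by expressing everything in $u_b^2$ and simplifying with the factorization above, checking the special value $\alpha=2$ as a sanity test. I would also verify, as required for the divisions, that $\alpha+2$, $\alpha^2-2$ and similar integers are below $p=r\alpha^2+1$.
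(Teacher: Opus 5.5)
Your Steps 1 and 3 are the paper's argument: the $Y^{\alpha+2}$ comparison giving \eqref{eq:local-second}, vanishing on $B$, the $X\mapsto -X$ symmetry for $-A$, and the degree bound. The gap is Step 2, which is the actual content of the proposition. You never verify that the stated combination vanishes; you write that the coefficients are ``presumably'' the right ones. The one concrete algebraic claim you make there is false: $(\alpha-2)(3\alpha^2+4\alpha-1)=3\alpha^3-2\alpha^2-9\alpha+2$, whereas $3\alpha^3-2\alpha^2-5\alpha+2$ equals $8$ at $\alpha=2$. So the suggested pairing of a factor $\alpha-2$ against $v_b$ does not guide you correctly.

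The elimination you describe also needs divisions that your size check does not cover. Eliminating $\eta_b$ via \eqref{eq:differentiated-local} divides by $2-\alpha$, which fails at $\alpha=2$, where that relation only says $v_b=0$. More seriously, solving the two relations for $(\eta_b,v_b)$ as multiples of $u_b^2$ means inverting a $2\times 2$ system. Its determinant is a unit times $D:=3\alpha^3-2\alpha^2-6\alpha+2$, and this cubic can exceed $p=r\alpha^2+1$. For $\alpha=7$ one gets $D=891=9\cdot 99$ with $99=2\cdot 7^2+1$, so no size argument excludes $p\mid D$; only the primality of $p$ does. In fact $D\not\equiv 0\pmod p$ always holds, but that needs its own arithmetic argument, which the proposal does not supply.

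The missing step is a division-free linear combination, which is how the paper proceeds. In \eqref{eq:local-second}, use \eqref{eq:local-first} to replace $2\rho_b^2$ by $\frac{\alpha}{\alpha-1}u_b\rho_b$ and $u_b^2$ by $\frac{2(\alpha-1)}{\alpha}u_b\rho_b$, then multiply by $24(\alpha+2)$. This gives \eqref{eq:local-linearized}, namely $6(\alpha^3-\alpha-2)u_b\rho_b-12(\alpha-1)(\alpha^2-2)\eta_b-4\alpha(\alpha+2)v_b=0$. Now take $3(\alpha^2-3)$ times \eqref{eq:differentiated-local} minus this relation. Checking the coefficients of $v_b$, $u_b\rho_b$ and $\eta_b$ shows the result is exactly $\alpha(3\alpha^2+4\alpha-1)v_b-3(3\alpha^3-2\alpha^2-5\alpha+2)(u_b\rho_b-\eta_b)=0$, which is $T(b)/(F'(b)G(b))=0$. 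No inversion is needed.

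If you do solve the system when $D\not\equiv0$, you get $v_b=\frac{3(\alpha-2)(3\alpha^3-2\alpha^2-5\alpha+2)}{2\alpha D}u_b\rho_b$ and $u_b\rho_b-\eta_b=\frac{(\alpha-2)(3\alpha^2+4\alpha-1)}{2D}u_b\rho_b$. So the factor $(\alpha-2)(3\alpha^2+4\alpha-1)$ you were reaching for sits in $u_b\rho_b-\eta_b$, not in the coefficient of $G'F''-F'G''$. With the explicit combination in place, the rest of your proposal goes through as written.
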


\begin{proof}
Fix $b\in B$ and substitute $X=b+Y$ into equation~\eqref{eq:exact-balanced}. Thus
\begin{equation}\label{eq:exact-balanced-shifted}
\sum_{a\in A}c_a(a+b+Y)^{\alpha^2+\alpha-1}-1=\binom{\alpha^2+\alpha-1}{\alpha-1}F(b+Y)^\alpha.
\end{equation}
As in the proof of Proposition~\ref{prop:first-differential}, comparison of the coefficients of $Y^\alpha$ on both sides of equation~\eqref{eq:exact-balanced-shifted} gives equation~\eqref{eq:S1}.

We next compare the coefficients of $Y^{\alpha+2}$ on both sides of equation~\eqref{eq:exact-balanced-shifted}. Since $a+b\in H$ and $|H|=\alpha^2$, the coefficient on the left-hand side is
\[
\binom{\alpha^2+\alpha-1}{\alpha+2}\sum_{a\in A}c_a(a+b)^{\alpha^2-3}=\binom{\alpha^2+\alpha-1}{\alpha+2}S_3(b).
\]
On the other hand, we have
\[
P(Y):=F(b+Y)\equiv F'(b)Y+\frac{F''(b)}2Y^2+\frac{F'''(b)}6Y^3 \pmod{Y^4},
\]
and Lemma~\ref{lem:coefficients-of-power} gives the coefficient on the right-hand side, and thus
\[
\binom{\alpha^2+\alpha-1}{\alpha+2}S_3(b)=\binom{\alpha^2+\alpha-1}{\alpha-1}\left(\frac{\alpha}{6}F'(b)^{\alpha-1}F'''(b)
+\frac{\alpha(\alpha-1)}8F'(b)^{\alpha-2}F''(b)^2\right).
\]

Dividing by equation~\eqref{eq:S1} gives
\[
\frac{\binom{\alpha^2+\alpha-1}{\alpha+2}}{\binom{\alpha^2+\alpha-1}{\alpha}}\frac{S_3(b)}{S_1(b)}=\frac{\alpha}{6}v_b+\frac{\alpha(\alpha-1)}8u_b^2.
\]
Here
\[
\frac{\binom{\alpha^2+\alpha-1}{\alpha+2}}{\binom{\alpha^2+\alpha-1}{\alpha}}=\frac{(\alpha-1)(\alpha^2-2)}{\alpha+2} \qquad\text{and}\qquad \frac{S_3(b)}{S_1(b)}=\frac12(2\rho_b^2-\eta_b),
\]
where the second identity follows from equation~\eqref{eq:S3S1}. Thus
\begin{equation}\label{eq:local-second}
\frac{(\alpha-1)(\alpha^2-2)}{2(\alpha+2)}\bigl(2\rho_b^2-\eta_b\bigr)=\frac{\alpha}{6}v_b+\frac{\alpha(\alpha-1)}8u_b^2.
\end{equation}

Equation~\eqref{eq:local-first} gives
\[
2\rho_b^2=\frac{\alpha}{\alpha-1}u_b\rho_b \qquad\text{and}\qquad u_b^2=\frac{2(\alpha-1)}{\alpha}u_b\rho_b.
\]
Substituting these identities into equation~\eqref{eq:local-second}, multiplying by $24(\alpha+2)$, and rearranging gives
\begin{equation}\label{eq:local-linearized}
6(\alpha^3-\alpha-2)u_b\rho_b-12(\alpha-1)(\alpha^2-2)\eta_b-4\alpha(\alpha+2)v_b=0.
\end{equation}

Set
\[
K_\alpha:=\alpha(3\alpha^2+4\alpha-1) \qquad\text{and}\qquad L_\alpha:=3(3\alpha^3-2\alpha^2-5\alpha+2).
\]
Subtracting equation~\eqref{eq:local-linearized} from $3(\alpha^2-3)$ times equation~\eqref{eq:differentiated-local} and simplifying gives
\begin{equation}\label{eq:third-order-local}
0=K_\alpha v_b-L_\alpha(u_b\rho_b-\eta_b).
\end{equation}

Define
\[
\mathcal P(X):=K_\alpha\bigl(G(X)F'''(X)-F(X)G'''(X)\bigr)-L_\alpha\bigl(G'(X)F''(X)-F'(X)G''(X)\bigr).
\]
Multiplying equation~\eqref{eq:third-order-local} by $F'(b)G(b)$ and using $F(b)=0$ gives $\mathcal P(b)=0$. Thus $\mathcal P$ vanishes at every zero of $F$. 

Applying the same argument to the decomposition $B+A=H$, let
\[
\widetilde F(X)=(-1)^\alpha G(-X),\qquad
\widetilde G(X)=(-1)^\alpha F(-X)
\]
be the associated root polynomials. The corresponding polynomial is $\widetilde{\mathcal P}(X)=\mathcal P(-X)$. Since $\widetilde{\mathcal P}$ vanishes on $A$, it follows that $\mathcal P$ vanishes on $-A$, the root set of $G$. By Lemma~\ref{lem:FG-coprime}, $F$ and $G$ have simple roots and no common root, so $FG\mid\mathcal P$. Since $\deg\mathcal P\leq 2\alpha-3<2\alpha=\deg(FG)$, we have $\mathcal P=0$, proving equation~\eqref{eq:third-order-global}.
\end{proof}

\begin{remark}
The local identities \eqref{eq:local-general-F} and
\eqref{eq:local-second} correspond to Kalmynin's Relations~X and~Y, respectively~\cite[Lemma~10]{Kalmynin}. Indeed,
\[
\frac{G'(b)}{G(b)}=\sum_{a\in A}\frac1{a+b},
\qquad
\frac{F''(b)}{F'(b)}=2\sum_{b'\in B\setminus\{b\}}\frac1{b-b'},
\]
which identifies \eqref{eq:local-general-F} with Relation~X; the standard logarithmic-derivative formulas for $G''/G$ and $F'''/F'$ similarly turn \eqref{eq:local-second} into Relation~Y in the balanced case. Thus the local information is already present in Kalmynin's
relations. The new point is that it yields the global polynomial identities \eqref{eq:second-order-general-polynomial} and \eqref{eq:third-order-global}, which can be exploited more efficiently by coefficient comparison.
\end{remark}

\section{A key power-sum congruence}\label{sec:key-congruence}
We first isolate the support properties of two indices $n$ and $m$ and derive a coefficient-convolution identity. We then apply the second- and third-order differential identities to obtain their parity and the key congruence.

Throughout this section, assume that $\alpha>2$. We retain the notation $F,G$ and the normalization $p_1(A)=p_1(B)=0$ from Section~\ref{sec:algebraic}. By Lemma~\ref{lem:newton-consequences}(2), not all of $p_1(A),\ldots,p_\alpha(A)$ vanish. We may therefore define
\[
n:=\min\{j\geq1:p_j(A)\neq0\}.
\]
If there is an integer $j\leq\alpha$ such that $p_j(A)\neq0$ and $n\nmid j$, define
\[
m:=\min\{1\leq j\leq\alpha:p_j(A)\neq0,\ n\nmid j\}.
\]
Since $p_1(A)=0$, we have $2\leq n\leq\alpha$, and if $m$ exists, then $n<m\leq\alpha$.

For an integer $\ell\geq0$, write $(x)_{\ell}:=x(x-1)\cdots(x-\ell+1)$, with $(x)_0:=1$.

\begin{lemma}\label{lem:admissible-indices}
Assume $\alpha>2$. Let $k=n$, or let $k=m$ when $m$ exists, and put $\tau:=p_k(A)$. Then:
\begin{enumerate}
\item $p_j(A)=p_j(B)=0$ whenever $1\leq j<k$ and $n\nmid j$;
\item if $m$ does not exist, then $p_j(A)=p_j(B)=0$ whenever $1\leq j\leq\alpha$ and $n\nmid j$;
\item $p_k(B)=-\tau\neq0$;
\item for all integers $t,s\geq0$ with $t+s\leq2\alpha-k$,
\begin{equation}\label{eq:derivative-convolution}
[X^{2\alpha-k-t-s}]F^{(t)}G^{(s)}=\frac{\tau}{k}\left((\alpha-k)_t(\alpha)_s+(-1)^{k-1}(\alpha)_t(\alpha-k)_s\right).
\end{equation}
\end{enumerate}
\end{lemma}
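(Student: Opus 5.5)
The plan is to derive (1)–(3) from the moment identities \eqref{eq:moments-general} by induction on $j$, and then obtain (4) by expanding $F$ and $G$ near their leading terms with Lemma~\ref{lem:newton-consequences}. Throughout, $\alpha=\beta$ and $d=\alpha^2$. Every index $j$ involved satisfies $j\leq\alpha<d$, so \eqref{eq:moments-general} applies. In the balanced case it reads
\[
\alpha p_j(A)+\alpha p_j(B)+\sum_{i=1}^{j-1}\binom ji p_i(A)p_{j-i}(B)=0,
\]
and $\alpha$ is invertible in $\F_p$.

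\textbf{Parts (1) and (2).} The statements about $A$ come straight from the definitions.
\begin{itemize}
\item If $k=n$, then $p_j(A)=0$ for all $j<n$.
\item If $k=m$, then $p_j(A)=0$ whenever $j<m$ and $n\nmid j$, by minimality of $m$.
\item If $m$ does not exist, then $p_j(A)=0$ for all $j\leq\alpha$ with $n\nmid j$.
\end{itemize}
For $B$, I argue by induction on $j$ over the indices $j$ with $n\nmid j$ (those with $j<k$ for (1), and those with $j\leq\alpha$ for (2)). Fix such a $j$ and consider a middle term with $1\leq i\leq j-1$.
\begin{itemize}
\item If $n\nmid i$, then $p_i(A)=0$.
\item If $n\mid i$, then $n\nmid j-i$, and $p_{j-i}(B)=0$ by the induction hypothesis, since $j-i<j$.
\end{itemize}
So every middle term vanishes. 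Also $p_j(A)=0$, so the identity collapses to $\alpha p_j(B)=0$, hence $p_j(B)=0$.

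\textbf{Part (3).} Take $j=k$ in the same identity and consider a middle term with $1\leq i\leq k-1$.
\begin{itemize}
\item If $n\nmid i$, then $p_i(A)=0$.
\item If $n\mid i$ and $k=n$, no such $i$ exists.
\item If $n\mid i$ and $k=m$, then $n\nmid m$, so $n\nmid m-i$, and part (1) gives $p_{m-i}(B)=0$.
\end{itemize}
Thus $\alpha(p_k(A)+p_k(B))=0$, so $p_k(B)=-\tau$. Moreover $\tau\neq0$ by the definition of $n$ or $m$.

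\textbf{Part (4).} Write $F=\sum_i(-1)^ie_i(B)X^{\alpha-i}$ and $G=\sum_j e_j(A)X^{\alpha-j}$.
\begin{itemize}
\item By part (1) and Lemma~\ref{lem:newton-consequences}(3) with $N=k-1$, we get $e_i(A)=e_i(B)=0$ for $1\leq i<k$ with $n\nmid i$.
\item The hypothesis "$k=n$ or $n\nmid k$" holds in both cases, and $k\leq\alpha$. So Lemma~\ref{lem:newton-consequences}(4) gives $ke_k(A)=(-1)^{k-1}\tau$ and $ke_k(B)=(-1)^{k-1}p_k(B)=(-1)^k\tau$.
\item Hence $[X^{\alpha-k}]F=(-1)^ke_k(B)=\tau/k$ and $[X^{\alpha-k}]G=(-1)^{k-1}\tau/k$.
\end{itemize}

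Next, $F^{(t)}G^{(s)}$ has degree at most $2\alpha-t-s$. Its coefficient of $X^{2\alpha-k-t-s}$ is a sum over pairs $(i,j)$ with $i+j=k$. Each pair contributes
\[
(-1)^ie_i(B)\,e_j(A)\,(\alpha-i)_t\,(\alpha-j)_s.
\]

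The step needing care is showing that the mixed pairs $1\leq i,j\leq k-1$ all vanish. This matters when $k=m$, because then $e_n(A)$ and $e_n(B)$ may be nonzero. The resolution is as follows.
\begin{itemize}
\item If $k=n$, both $i$ and $j$ are less than $n$, hence not divisible by $n$.
\item If $k=m$, then $n\nmid m$, so at least one of $i,j$ is not divisible by $n$.
\end{itemize}
In either case, since $i,j<k$, the corresponding elementary symmetric function is $0$.

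Only the pairs $(k,0)$ and $(0,k)$ survive. They give $(\tau/k)(\alpha-k)_t(\alpha)_s$ and $(-1)^{k-1}(\tau/k)(\alpha)_t(\alpha-k)_s$, which is exactly \eqref{eq:derivative-convolution}. The condition $t+s\leq2\alpha-k$ only ensures that the exponent is nonnegative.
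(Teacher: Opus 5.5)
Your proposal is correct and follows the paper's proof essentially step for step. You use the same induction on the moment identities \eqref{eq:moments-general} for (1)--(3), and the same appeal to Lemma~\ref{lem:newton-consequences}(3)--(4) to kill the mixed products $e_i(B)e_{k-i}(A)$ and read off $[X^{\alpha-k}]F=\tau/k$ and $[X^{\alpha-k}]G=(-1)^{k-1}\tau/k$ for (4). The only difference is presentational: you split explicitly on whether $n\mid i$, whereas the paper argues contrapositively that a nonzero $p_i$ or $e_i$ with $i<k$ forces $n\mid i$.
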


\begin{proof}
For part~\textup{(1)}, the required vanishing for $A$ follows directly from the definitions of $n$ and $m$. We prove the corresponding assertion for $B$ by induction on $j$. Let $1\leq j<k$ with $n\nmid j$. Taking this value of $j$ in equation~\eqref{eq:moments-general}, the term involving $p_j(A)$ vanishes. If an intermediate product $p_i(A)p_{j-i}(B)$ were nonzero, then the assertion for $A$ and the induction hypothesis for $B$ would give $n\mid i$ and $n\mid j-i$, and thus $n\mid j$, a contradiction. Thus $0=\alpha p_j(B)$, proving
\begin{equation}\label{eq:lower-support}
p_j(A)=p_j(B)=0 \qquad\text{whenever }1\leq j<k\text{ and }n\nmid j.
\end{equation}
In particular, if $1\leq j<k$, $T\in\{A,B\}$, and $p_j(T)\neq0$, then $n\mid j$.

If $m$ does not exist, its definition gives $p_j(A)=0$ whenever $1\leq j\leq\alpha$ and $n\nmid j$. The same induction throughout this range proves part~\textup{(2)}.

For part~\textup{(3)}, take $j=k$ in equation~\eqref{eq:moments-general}. Every intermediate term vanishes: otherwise the observation following equation~\eqref{eq:lower-support} would give $n\mid i$ and $n\mid k-i$. This is impossible when $k=n$, and when $k=m$ it contradicts $n\nmid m$. Thus $0=\alpha(p_k(A)+p_k(B))$, and part~\textup{(3)} follows from the definition of $k$.

It remains to prove part~\textup{(4)}. Applying Lemma~\ref{lem:newton-consequences}\textup{(3)} to $A$ and $B$, with $N=k-1$, and using equation~\eqref{eq:lower-support}, gives
\begin{equation}\label{eq:lower-elementary-support}
e_j(A)=e_j(B)=0
\qquad\text{whenever }1\leq j<k\text{ and }n\nmid j.
\end{equation}
Consequently,
\begin{equation}\label{eq:coefficient-product-vanishing}
e_i(B)e_{k-i}(B)=e_i(B)e_{k-i}(A)=e_i(A)e_{k-i}(A)=0 \qquad \text{for }1\leq i<k.
\end{equation}
Indeed, if one of these products were nonzero, equation~\eqref{eq:lower-elementary-support} would give $n\mid i$ and $n\mid k-i$. If $k=n$, then $n\mid i$ contradicts $1\leq i<n$. If $k=m$, adding the two divisibilities gives $n\mid m$, contrary to the definition of $m$.

Since either $k=n$ or $k=m$ with $n\nmid m$, Lemma~\ref{lem:newton-consequences}(4) applied to $A$ and $B$ gives
\[
k e_k(A)=(-1)^{k-1}\tau \qquad \text{and} \qquad k e_k(B)=(-1)^k\tau.
\]
Note that we have
\[
F(X)=\sum_{i=0}^{\alpha}(-1)^i e_i(B)X^{\alpha-i} \qquad \text{and} \qquad G(X)=\sum_{i=0}^{\alpha}e_i(A)X^{\alpha-i}.
\]
Therefore
\begin{equation}\label{eq:kth-polynomial-coefficients}
[X^{\alpha-k}]F=\frac{\tau}{k} \qquad \text{and} \qquad [X^{\alpha-k}]G=\frac{(-1)^{k-1}\tau}{k}.
\end{equation}

For $t,s\geq0$ with $t+s\leq2\alpha-k$, we have the following:
\[
[X^{2\alpha-k-t-s}]F^{(t)}G^{(s)}=\sum_{i=0}^k(\alpha-i)_t(\alpha-k+i)_s(-1)^i e_i(B)e_{k-i}(A).
\]
By equation~\eqref{eq:coefficient-product-vanishing}, only the terms $i=0$ and $i=k$ remain. Using equation~\eqref{eq:kth-polynomial-coefficients}, their sum is
\[
\frac{\tau}{k}(\alpha-k)_t(\alpha)_s + \frac{(-1)^{k-1}\tau}{k}(\alpha)_t(\alpha-k)_s,
\]
which proves equation~\eqref{eq:derivative-convolution}.
\end{proof}

We now apply the second- and third-order differential identities.
\begin{proposition}\label{prop:Phi-no-residues}
Assume $\alpha>2$. Let $k=n$, or let $k=m$ when $m$ exists. Then $k$ is even. Moreover, 
\[
\Phi_\alpha(k)\equiv0\pmod p,
\]
where
\[
\Phi_\alpha(X)=(3X^2-9X-6)\alpha^2+4(X^2+2)\alpha-(X-1)(X-2) \in \Z[X].
\]
\end{proposition}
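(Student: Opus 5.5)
The plan is to read off the coefficient of $X^{2\alpha-k-2}$ in the second-order identity \eqref{eq:second-order-global} and the coefficient of $X^{2\alpha-k-3}$ in the third-order identity \eqref{eq:third-order-global}. In each case, Lemma~\ref{lem:admissible-indices}(4) evaluates every product $F^{(t)}G^{(s)}$ that appears. The key observation is that every term in these identities has the form $F^{(t)}G^{(s)}$ with $t+s$ equal to $2$ (respectively $3$). So the coefficient just below the top, at depth $k$, is given exactly by \eqref{eq:derivative-convolution} as $\frac{\tau}{k}\bigl((\alpha-k)_t(\alpha)_s+(-1)^{k-1}(\alpha)_t(\alpha-k)_s\bigr)$. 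Here $\tau\neq0$ and $k<p$, so after dividing by $\tau/k$ each identity becomes a polynomial relation in $\alpha$ and $k$ that must vanish mod $p$.

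\emph{Parity step.} Consider the second-order identity \eqref{eq:second-order-global}. Its left side is symmetric in $(t,s)=(2,0)$ and $(0,2)$, and so is its right side, $F'G'$. The sum $(\alpha-k)_t(\alpha)_s+(\alpha)_t(\alpha-k)_s$ over the symmetric pair corresponds to $k$ even. For $k$ odd, the sign flips and we instead get antisymmetric combinations. I expect the computation to replicate the proof of Proposition~\ref{prop:equal-sizes} with $\alpha=\beta$ and $u=\tau/k$, $v=(-1)^{k-1}\tau/k$. The result should be that the odd case gives a nonzero multiple $\alpha\tau(2\alpha+2)(k-1)/k$. Concretely, \eqref{eq:balancedness-factor} with $\beta=\alpha$ reads $0=\alpha(2\alpha+2)(k-1)\tau$ for $k$ odd. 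This is impossible, since $2\le k\le\alpha$ and $2\alpha+2<p$ because $p=r\alpha^2+1$ with $r\ge2$. Hence $k$ is even. For $k$ even, the second-order identity gives $0=0$ (the factor $\alpha-\beta$), so it carries no further information and we must go to third order.

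\emph{Congruence step.} With $k$ even, so that $(-1)^{k-1}=-1$, the relevant coefficient of $F^{(t)}G^{(s)}$ is $\frac{\tau}{k}\bigl((\alpha-k)_t(\alpha)_s-(\alpha)_t(\alpha-k)_s\bigr)$. This is antisymmetric under $t\leftrightarrow s$, which matches the antisymmetric combinations $GF'''-FG'''$ and $G'F''-F'G''$ in \eqref{eq:third-order-global}. Nontrivial information should survive here. I will compute
\[
[X^{2\alpha-k-3}](GF'''-FG''')=\tfrac{2\tau}{k}\bigl((\alpha-k)_3-(\alpha)_3\bigr)
\]
and
\[
[X^{2\alpha-k-3}](G'F''-F'G'')=\tfrac{2\tau}{k}\bigl((\alpha-k)_2\alpha-(\alpha)_2(\alpha-k)\bigr).
\]
The second simplifies to $\tfrac{2\tau}{k}\,\alpha(\alpha-k)(-k)$. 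Likewise, $(\alpha-k)_3-(\alpha)_3$ factors as $-k$ times a quadratic in $\alpha$ and $k$. Substituting into \eqref{eq:third-order-global} and cancelling the nonzero factor $-2\tau$ (and the factor $k/k$) should leave $\alpha$ times a polynomial, which I expect to equal $\Phi_\alpha(k)$ up to a unit. Since $\alpha<p$, this gives $\Phi_\alpha(k)\equiv0\pmod p$.

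The main obstacle is the bookkeeping in this final simplification. I need to check that $K_\alpha\bigl((\alpha-k)_3-(\alpha)_3\bigr)-L_\alpha\bigl((\alpha-k)_2\alpha-(\alpha)_2(\alpha-k)\bigr)$ equals $-k\alpha\,\Phi_\alpha(k)$ up to a harmless constant. I would verify this first by expanding the falling factorials, and then spot-check a small case such as $k=2$. I also need to confirm that the degree condition $t+s\le2\alpha-k$ in Lemma~\ref{lem:admissible-indices}(4) holds, which it does since $k\le\alpha$ and $\alpha>2$.
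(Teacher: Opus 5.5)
Your proposal is correct and matches the paper's proof: parity from the $X^{2\alpha-k-2}$ coefficient of \eqref{eq:second-order-global}, and the congruence from the $X^{2\alpha-k-3}$ coefficient of \eqref{eq:third-order-global}, with both evaluated via \eqref{eq:derivative-convolution}. Reusing \eqref{eq:balancedness-factor} is legitimate, since Lemma~\ref{lem:admissible-indices}(4) gives exactly the two-term formulas with $u=\tau/k$, $v=(-1)^{k-1}\tau/k$ (at the cost of a harmless extra factor $\alpha+1$). The identity you flagged holds exactly, with no extra constant: $(\alpha-k)_3-(\alpha)_3=-k\bigl((k+1)(k+2)-3\alpha(k+2)+3\alpha^2\bigr)$, and the resulting combination is $K_\alpha P_1-L_\alpha\,\alpha(\alpha-k)=\alpha\Phi_\alpha(k)$.
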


\begin{proof}
By equation~\eqref{eq:derivative-convolution},
\begin{align*}
[X^{2\alpha-k-2}](GF''+FG'')
&=\frac{\tau}{k}\bigl(1-(-1)^k\bigr)\bigl((\alpha-k)_2+(\alpha)_2\bigr),\\
[X^{2\alpha-k-2}]F'G' &=\frac{\alpha(\alpha-k)\tau}{k}\bigl(1-(-1)^k\bigr).
\end{align*}
Taking the coefficient of $X^{2\alpha-k-2}$ in equation~\eqref{eq:second-order-global}, we obtain
\begin{align*}
0=\frac{\alpha\tau}{k}\bigl(1-(-1)^k\bigr)\left((\alpha-k)_2+(\alpha)_2-2(\alpha-1)(\alpha-k)\right)=\alpha(k-1)\tau\bigl(1-(-1)^k\bigr).
\end{align*}
Since $2\leq k\leq\alpha<p$ and $\tau\neq0$, it follows that $(-1)^k=1$, and thus $k$ is even.

Since $k\leq\alpha$ and $\alpha>2$, we have $2\alpha-k\geq\alpha\geq3$, so equation~\eqref{eq:derivative-convolution} may be applied with $t+s=3$.

Set
\[
P_1:=(k+1)(k+2)-3\alpha(k+2)+3\alpha^2 \qquad \text{and} \qquad P_2:=\alpha(\alpha-k).
\]
Since $k$ is even, equation~\eqref{eq:derivative-convolution} gives
\begin{align}
[X^{2\alpha-k-3}](GF'''-FG''')&=\frac{2\tau}{k}\bigl((\alpha-k)_3-(\alpha)_3\bigr)
=-2\tau P_1,\label{eq:polynomial-first}\\
[X^{2\alpha-k-3}](G'F''-F'G'') &=\frac{2\alpha(\alpha-k)\tau}{k}\bigl((\alpha-k-1)-(\alpha-1)\bigr)=-2\tau P_2.\label{eq:polynomial-second}
\end{align}
Taking the coefficient of $X^{2\alpha-k-3}$ in equation~\eqref{eq:third-order-global} and using equations~\eqref{eq:polynomial-first} and~\eqref{eq:polynomial-second}, we find
\[
0=-2\tau\left[\alpha(3\alpha^2+4\alpha-1)P_1-3(3\alpha^3-2\alpha^2-5\alpha+2)P_2\right].
\]
A direct simplification gives
\begin{align*}
&\alpha(3\alpha^2+4\alpha-1)P_1-3(3\alpha^3-2\alpha^2-5\alpha+2)P_2\\
&=\alpha\left[(3k^2-9k-6)\alpha^2+4(k^2+2)\alpha-(k-1)(k-2)\right]=\alpha\Phi_\alpha(k).
\end{align*}
Therefore $2\alpha\tau\,\Phi_\alpha(k)=0$ in $\F_p$. Since $p$ is odd, $\alpha<p$, and $\tau\neq0$, we have $2\alpha\tau\neq0$ in $\F_p$. Thus $\Phi_\alpha(k)\equiv0\pmod p$.
\end{proof}

For later use, put $C_\alpha:=3\alpha^2+4\alpha-1$. If $\alpha\geq4$, then
\begin{equation}\label{eq:C-nonzero}
C_\alpha\not\equiv0\pmod p.
\end{equation}
Indeed, since $p=r\alpha^2+1$ with $r\geq2$, we have $0<C_\alpha<2p,$ because
\[
2p-C_\alpha\geq4\alpha^2+2-(3\alpha^2+4\alpha-1)=(\alpha-1)(\alpha-3)>0.
\]
Thus $p\mid C_\alpha$ would force $C_\alpha=p$. Reducing modulo $\alpha$ gives $-1\equiv1\pmod\alpha$. Thus $\alpha\mid2$, contradicting $\alpha\ge4$.

We next use the congruence from Proposition~\ref{prop:Phi-no-residues} to show that the index $m$ does not exist.

\begin{corollary}\label{cor:no-second-index}
Assume that $\alpha>2$. Then the index $m$ does not exist. Consequently, $p_j(A)=p_j(B)=0$
whenever $1\leq j\leq\alpha$ and $n\nmid j$.
\end{corollary}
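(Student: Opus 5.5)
The plan is to argue by contradiction: assume that $m$ exists, and play the two congruences $\Phi_\alpha(n)\equiv\Phi_\alpha(m)\equiv0\pmod p$ from Proposition~\ref{prop:Phi-no-residues} against the size constraints $2\le n<m\le\alpha$ and $p=r\alpha^2+1$ with $r\ge2$. Once $m$ is ruled out, the second assertion is exactly Lemma~\ref{lem:admissible-indices}(2), so no further work is needed there.

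\textbf{Step 1: parity and small cases.} By Proposition~\ref{prop:Phi-no-residues}, both $n$ and $m$ are even. Since $n\nmid m$ and $n\ge2$ is even, we cannot have $n=2$. Hence $n\ge4$, and then $m\ge6$ because $m$ is even, exceeds $n$, and is not divisible by $n$. Thus $\alpha\ge m\ge6$. This disposes of all small $\alpha$, and in particular puts us in the range where \eqref{eq:C-nonzero} gives $C_\alpha\not\equiv0\pmod p$.

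\textbf{Step 2: Vieta relations.} As a polynomial in $X$,
\[
\Phi_\alpha(X)=C_\alpha X^2-(9\alpha^2-3)X-2(3\alpha-1)(\alpha-1).
\]
Subtract the two congruences and divide by $m-n$, which is nonzero in $\F_p$ because $0<m-n<\alpha<p$. Since $C_\alpha$ is invertible, $n$ and $m$ are then the two roots of $\Phi_\alpha$ modulo $p$, which gives
\[
C_\alpha(n+m)\equiv 9\alpha^2-3,\qquad C_\alpha\,nm\equiv -2(3\alpha-1)(\alpha-1)\pmod p.
\]

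\textbf{Step 3: eliminate $\alpha^2$ (main obstacle).} The task is to turn these congruences into integer identities and reach a contradiction. The difficulty is that the quantities involved, for example $C_\alpha(n+m)$, can be as large as about $6\alpha^3$, while $p$ may be as small as $2\alpha^2+1$. A direct ``both sides lie in $(-p,p)$'' argument therefore fails.

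My first attempt will be to reduce the degree in $\alpha$ by using the relation $r\alpha^2\equiv-1\pmod p$ (equivalently $\alpha^2\equiv -1/r$), or by taking linear combinations of the two Vieta congruences that cancel the $\alpha^2$ terms. For instance, $(9\alpha^2-3)\cdot\bigl(C_\alpha\,nm\bigr)$ and $-2(3\alpha-1)(\alpha-1)\cdot\bigl(C_\alpha(n+m)\bigr)$ can be compared after cancelling $C_\alpha$. Alternatively, I would form
\[
\Phi_\alpha(n)(3m^2-9m-6)-\Phi_\alpha(m)(3n^2-9n-6),
\]
which kills the $\alpha^2$ term and leaves $4\alpha\,D(n,m)-E(n,m)\equiv0$ with explicit polynomials $D,E$. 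Solving this for $\alpha$ modulo $p$ and substituting back should yield a congruence whose integer size I can bound by a small multiple of $p$, using $m\le\alpha$, $n\le m-2$, and $p>2\alpha^2$. This should force an exact integer equation. That equation would then be refuted by reduction modulo $\alpha$, in the same way $\alpha\mid 2$ was extracted in the proof of \eqref{eq:C-nonzero}, or by reduction modulo small primes, using that $n$ and $m$ are even.

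If this elimination still leaves terms too large relative to $p$, I would fall back on comparing further coefficients (the next ones down) in \eqref{eq:second-order-global} and \eqref{eq:third-order-global} at the index $m$. These give extra congruences that involve $p_n$ through mixed products $e_n e_{m-n}$-type terms, which can then be combined with the above.
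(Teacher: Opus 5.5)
\textbf{There is a genuine gap: Step 3 is never resolved, and the routes you sketch for it would not work.} Steps 1 and 2 are correct. The expansion $\Phi_\alpha(X)=C_\alpha X^2-(9\alpha^2-3)X-2(3\alpha-1)(\alpha-1)$ and both Vieta congruences check out, and $m\ge 6$ puts you in the range of \eqref{eq:C-nonzero}. The trouble is that eliminating $\alpha$ is the wrong goal, because every such elimination produces integers far larger than $p$:
\begin{itemize}
\item Your $\alpha^2$-killing combination $\Phi_\alpha(n)(3m^2-9m-6)-\Phi_\alpha(m)(3n^2-9n-6)$ works out to $12(m-n)\bigl[\alpha(3nm+4(n+m)-6)-(n+m-3)\bigr]\equiv0$. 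The bracketed quantity is of order $\alpha^3$.
\item Solving that for $\alpha$ and substituting back gives integers of order $\alpha^5$ or $\alpha^6$.
\item Cross-multiplying the Vieta relations gives $(9\alpha^2-3)nm+2(3\alpha-1)(\alpha-1)(n+m)\equiv0$, which is of order $\alpha^4$.
\item Substituting $\alpha^2\equiv-1/r$ only trades $\alpha^2$ for the unbounded parameter $r$.
\end{itemize}
Since $p$ can be as small as $2\alpha^2+1$, none of these leads to a contradiction by size. The fallback of comparing further coefficients is left unspecified.

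\textbf{The missing idea.} Keep $\alpha$, but arrange for it to appear only linearly with a bounded coefficient. Because $C_\alpha$ is a unit and $0<m-n<p$, your Step 2 says exactly that $\Phi_\alpha(X)\equiv C_\alpha(X-n)(X-m)\pmod p$ as polynomials. Evaluate this at $X=\alpha$ and use the exact integer identity $\Phi_\alpha(\alpha)=C_\alpha(\alpha-1)(\alpha-2)$. This is the case $X=\alpha$ of $\Phi_\alpha(X)=C_\alpha(X-1)(X-2)-12\alpha(\alpha-X)$. In your notation it is one line:
\[
\alpha\cdot C_\alpha(n+m)-C_\alpha nm\equiv\alpha(9\alpha^2-3)+2(3\alpha-1)(\alpha-1)=(3\alpha-2)C_\alpha .
\]
Cancelling $C_\alpha$ gives $\alpha(n+m-3)-nm+2\equiv0\pmod p$. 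The left side equals $(\alpha-m)n+\alpha(m-3)+2>0$. It is also smaller than $\alpha(n+m)\le2\alpha^2<p$, which is a contradiction.

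This is precisely the paper's argument. The paper multiplies the congruences for $n$ and $m$ by $\alpha-m$ and $\alpha-n$ and subtracts, so the $12\alpha(\alpha-n)(\alpha-m)$ terms cancel. That leaves $-C_\alpha(m-n)\bigl(\alpha(n+m-3)-nm+2\bigr)\equiv0\pmod p$, and the same size bound finishes. Your final deduction of the second assertion from Lemma~\ref{lem:admissible-indices}(2) is fine.
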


\begin{proof}
Suppose, for contradiction, that $m$ exists. By
Proposition~\ref{prop:Phi-no-residues}, both $n$ and $m$ are even
and
\[
\Phi_\alpha(n)\equiv\Phi_\alpha(m)\equiv0\pmod p.
\]
Since $n\nmid m$, we have $n\neq2$, and hence
$4\leq n<m\leq\alpha$. 

By \eqref{eq:C-nonzero}, $C_\alpha\not\equiv0\pmod p$. We use the identity
\[
\Phi_\alpha(X)=C_\alpha(X-1)(X-2)-12\alpha(\alpha-X).
\]
Since $\Phi_\alpha(n)\equiv\Phi_\alpha(m)\equiv0\pmod p$, we have
\begin{equation}\label{eq:cong_n}
C_\alpha(n-1)(n-2)-12\alpha(\alpha-n) \equiv 0 \pmod p
\end{equation}
\begin{equation}\label{eq:cong_m}
C_\alpha(m-1)(m-2)-12\alpha(\alpha-m) \equiv 0 \pmod p
\end{equation}
Multiplying equation~\eqref{eq:cong_n} by $\alpha-m$ and equation~\eqref{eq:cong_m} by $\alpha-n$, and then subtracting, gives
\[
0\equiv-C_\alpha(m-n)\bigl(\alpha(n+m-3)-nm+2\bigr)\pmod p.
\]
Since $C_\alpha\not\equiv0\pmod p$ and $0<m-n<p$, we obtain $\alpha(n+m-3)-nm+2\equiv0\pmod p$. On the other hand,
\[
\alpha(n+m-3)-nm+2=(\alpha-m)n+\alpha(m-3)+2>0,
\]
whereas
\[
\alpha(n+m-3)-nm+2<\alpha(n+m)\leq2\alpha^2<p.
\]
This is a contradiction. Thus, $m$ does not exist. The last sentence of the corollary follows from Lemma~\ref{lem:admissible-indices}~\textup{(2)}.
\end{proof}

\begin{remark}\label{rem:K}
The congruence in \cref{prop:Phi-no-residues} can also be recovered from Kalmynin's argument. Kalmynin first proves the equality $\alpha=\beta$ for additive decompositions of arbitrary proper multiplicative subgroups \cite[Theorem~2]{Kalmynin}. He then analyzes the power sums more closely, combining the reciprocal-sum relations from \cite[Lemma~10]{Kalmynin} with the residue theorem applied to suitable rational differential forms on $\mathbb P^1$. In \cite[Section~5]{Kalmynin}, he specializes this argument to the quadratic-residue subgroup in order to prove S\'ark\"ozy's conjecture. In this specialization, he uses $d=\alpha^2=(p-1)/2$, and thus
\[
\alpha^2\equiv-\frac{1}{2}\pmod p.
\]
He then obtains the corresponding congruence for each $k\in\{n,m\}$ \cite[Lemma~13]{Kalmynin}.

However, a close inspection of the proof of Kalmynin's Lemma~13, together with \cite[Lemmas~14--16]{Kalmynin}, shows that if the coefficients are kept in their unspecialized forms, the quadratic-residue condition is not used in the residue calculations leading to the following identity on \cite[page 29]{Kalmynin}, which, after substituting $d=\alpha^2$, takes the following form:
\begin{equation}\label{eq:Kalmynin-gamma-relation}
\left(1-\frac{\gamma_1(\alpha-1)}{\gamma_0^2}\right)\left(\frac{2}{\gamma_0}-1 \right)^{-1}(2\gamma_5+\gamma_4)=2\gamma_1\gamma_2-\gamma_4,
\end{equation}
where
\begin{align*}
\gamma_0&=\frac{\alpha}{\alpha-1}, \qquad \gamma_1=\frac{\alpha(\alpha+2)}{(\alpha-1)(\alpha^2-2)}, \qquad \gamma_2=\alpha^2-(k+2)\alpha+\frac{(k+1)(k+2)}3,\\
\gamma_3&=\alpha-\frac{k+1}{2}, \qquad \gamma_4=(k+2)\gamma_0\gamma_3-k\alpha, \qquad \text{and}\qquad \gamma_5=\alpha^2-(k+2)\gamma_0\gamma_3. 
\end{align*}
The congruence $\alpha^2\equiv-1/2\pmod p$ is used only afterward, when equation~\eqref{eq:Kalmynin-gamma-relation} is simplified to the congruence in Kalmynin's Lemma~13. In fact, equation~\eqref{eq:Kalmynin-gamma-relation} is equivalent to
\[
\frac{\alpha}{6(\alpha-1)(\alpha^2-2)}\Phi_\alpha(k)=0.
\]
Since $p=r\alpha^2+1$ with $r\ge2$ and $\alpha>2$, the stated congruence follows.
\end{remark}

\begin{remark}
Rudnev and Tyrrell obtain the congruence in \cref{prop:Phi-no-residues} by a different argument, using Kalmynin's equal-size theorem together with reciprocal-transfer identities and symmetric-sum calculations \cite[Proposition~5.1]{RT26}. Their proof then uses a root-uniqueness argument to obtain the required power-sum support, followed by a separate arithmetic analysis of the congruence. Here \cref{cor:no-second-index} gives the support restriction directly. Thus, once the congruence is established, \cref{cor:no-second-index} and \cref{sec:completion} replace the root-uniqueness and final arithmetic stages of their argument, with the proof completed by comparing the lowest-degree terms in the same global differential identities.
\end{remark}

\section{Completion of the proof}\label{sec:completion}

We now complete the proof of \cref{thm:main} by comparing the lowest-degree terms in the differential identities. 

\begin{proof}[Proof of Theorem~\ref{thm:main}]
By Theorem~\ref{thm:alpha=beta}, for some integers $\alpha,r\geq2$, we have
\[
|A|=|B|=\alpha,\qquad |H|=\alpha^2,\qquad p=r\alpha^2+1.
\]
We retain the normalization $p_1(A)=p_1(B)=0$ and the index $n$ from \cref{sec:key-congruence}. Suppose, for a contradiction, that $\alpha>2$.

Applying Proposition~\ref{prop:Phi-no-residues} with $k=n$, the integer $n$ is even and $\Phi_\alpha(n)\equiv0\pmod p$. Since
\[
\Phi_\alpha(2)=-12\alpha(\alpha-2)\not\equiv0\pmod p,
\]
we have $n\geq4$. Indeed, $p=r\alpha^2+1\geq2\alpha^2+1>12$, while $0<\alpha-2<\alpha<p$, so all three factors $12,\alpha,\alpha-2$ are nonzero in $\F_p$.

By \cref{cor:no-second-index} and Lemma~\ref{lem:newton-consequences}\textup{(3)},
\begin{equation}\label{eq:completion-support}
e_j(A)=e_j(B)=0
\qquad\text{whenever }1\leq j\leq\alpha\text{ and }n\nmid j.
\end{equation}
Since $0\notin H=A+B$, at least one of $A$ and $B$, say $S$, does not contain $0$. Then $e_\alpha(S)=\prod_{s\in S}s\neq0,$ and thus equation~\eqref{eq:completion-support} gives $n\mid\alpha$. Moreover, if a lower term $X^{\alpha-j}$ occurs with nonzero coefficient in either $F$ or $G$, then equation~\eqref{eq:completion-support} gives $n\mid j$, and hence $n\mid(\alpha-j)$. Thus
\begin{equation}\label{eq:FG-in-Xn}
F,G\in\F_p[X^n].
\end{equation}
Moreover,
\[
F(0)G(0)\neq0.
\]
Indeed, if, say, $F(0)=0$, then equation~\eqref{eq:FG-in-Xn} implies that $0$ is a root of $F$ of multiplicity at least $n$, contradicting Lemma~\ref{lem:FG-coprime}; the same argument applies to $G$.

Let $\nu$ be the smallest positive exponent occurring with a nonzero coefficient in either $F$ or $G$. By equation~\eqref{eq:FG-in-Xn}, $4\leq n\leq \nu\leq\alpha<p.$ Write
\[
F(X)\equiv f_0+f_\nu X^\nu\pmod{X^{\nu+1}},\qquad G(X)\equiv g_0+g_\nu X^\nu\pmod{X^{\nu+1}},
\]
where $f_0g_0\neq0$ and $(f_\nu,g_\nu)\neq(0,0)$.

Since $F'G'$ is divisible by $X^{2\nu-2}$, comparison of the coefficients of $X^{\nu-2}$ in equation~\eqref{eq:second-order-global} gives
\[
\alpha \nu(\nu-1)(g_0f_\nu+f_0g_\nu)=0.
\]
As $1\leq\alpha,\nu,\nu-1<p$, we obtain
\begin{equation}\label{eq:lowest-plus}
g_0f_\nu+f_0g_\nu=0.
\end{equation}

By equation~\eqref{eq:C-nonzero}, $C_\alpha\not\equiv0\pmod p$. Moreover, $G'F''-F'G''$ is divisible by $X^{2\nu-3}$. Comparing the coefficients of $X^{\nu-3}$ in equation~\eqref{eq:third-order-global} gives
\[
\alpha C_\alpha \nu(\nu-1)(\nu-2)(g_0f_\nu-f_0g_\nu)=0.
\]
Since every factor preceding the final parenthesis is nonzero in $\F_p$, we get
\begin{equation}\label{eq:lowest-minus}
g_0f_\nu-f_0g_\nu=0.
\end{equation}
Since $p$ is odd, equations~\eqref{eq:lowest-plus} and~\eqref{eq:lowest-minus} imply that $g_0f_\nu=f_0g_\nu=0$. Since $f_0g_0\neq0$, this forces $f_\nu=g_\nu=0$, contradicting the definition of $\nu$.

Therefore $\alpha=2$. Hence $|A|=|B|=2$ and $|H|=4$, as required.
\end{proof}

\section*{Acknowledgments}
The authors thank Ernie Croot, Alexander Kalmynin, Giorgis Petridis, Misha Rudnev, and Ilya Shkredov for helpful discussions. S.~Yoo was supported by the Institute for Basic Science (IBS-R029-C1). 

The authors also acknowledge the use of ChatGPT by OpenAI to assist with the writing of the manuscript and to check algebraic computations. All mathematical ideas and proofs are due to the authors.

\bibliographystyle{abbrv}
\bibliography{references}

\end{document}